\numberwithin{equation}{section}
\newtheorem{theorem}{Theorem}
\newtheorem{lemma}{Lemma}
\newtheorem{corollary}{Corollary}
\newtheorem{conjecture}{Conjecture}
\theoremstyle{definition}
\newtheorem{remark}{Remark}
\DeclareMathOperator{\Tr}{Tr}
\newcommand{\ad}{\mathrm{ad}}
\newcommand{\adstar}{\ad^{\star}}
\newcommand{\p}{\textcolor{red}{P}}
\newcommand{\n}{\textcolor{blue}{N}}
\DeclareMathOperator{\Ric}{Ric}
\newcommand\sixJ[3]{\left\{\begin{matrix}
#1 & #2 & #3 \\
\frac{N-1}{2} & \frac{N-1}{2} & \frac{N-1}{2}
\end{matrix}
\right\}}
\newcommand{\weight}{\alpha}
\newcommand{\soop}{\mathcal{C}}
\newcommand{\Diff}{\mathcal{D}}
\title{Ricci curvature for hydrodynamics on the sphere}
\author{Leandro Lichtenfelz}
\author{Klas Modin}
\author{Stephen C. Preston}
\address{L. Lichtenfelz: Wake Forest University, Winston-Salem, NC., U.S.A.}
\email{lichtel@wfu.edu}
\address{K. Modin: Mathematical Sciences, Chalmers and University of Gothenburg, Gothenburg, SE-412~96, Sweden}
\email{klas.modin@chalmers.se}
\address{S. C. Preston: Department of Mathematics, CUNY Brooklyn College, New York, NY 10468 
and CUNY Graduate Center, New York, NY 10016, USA}
\email{Stephen.Preston@brooklyn.cuny.edu}
\begin{document}

\begin{abstract}
    The geometric description of incompressible hydrodynamics, as geodesic motion on the infinite-dimensional group of volume-preserving diffeomorphisms, enables notions of curvature in the study of fluids in order to study stability.
    Formulas for Ricci curvature are often simpler than those for sectional curvature, which typically takes both signs, but the drawback is that Ricci curvature is rarely well-defined in infinite-dimensional spaces.
    Here we suggest a definition of Ricci curvature in the case of two-dimensional hydrodynamics,  
    based on the finite-dimensional Zeitlin models arising in quantization theory, which gives a natural tool for renormalization.
    We provide formulae for the finite-dimensional approximations and give strong numerical evidence that these converge in the infinite-dimensional limit, based in part on four new conjectured identities for Wigner $6j$ symbols.
    The suggested limiting expression for (average) Ricci curvature is surprisingly simple and demonstrates an average instability for high-frequency modes which helps explain long-term numerical observations of spherical hydrodynamics due to mixing.
    \\[2ex]
    \textbf{Keywords:} Ricci curvature, diffeomorphism groups, Zeitlin model, Wigner symbols, infinite-dimensional geometry, hydrodynamics, Euler equations
    \\[2ex]
    \textbf{MSC2020:} 35Q31, 53C21, 37K25, 81S10
\end{abstract}

\maketitle 

\tableofcontents

\section{Introduction}

The incompressible Euler equations describe the motion of an ideal, inviscid fluid on a Riemannian manifold $(M, g)$. 
If $u$ denotes the time-dependent vector field on $M$ representing the velocity of the fluid, the Euler equations are
\begin{gather}\label{eq_euler}
\begin{split}
\partial_tu + \nabla_uu &= -\nabla p, \\
\mathrm{div}\, u &= 0, \\
u(0) &= u_0
\end{split}
\end{gather}
where $\nabla_uu$ is the covariant derivative of $u$ along itself and $p$ is the pressure function. 

Arnold~\cite{arnold2013differential} reinterpreted the system~\eqref{eq_euler} as geodesic equations on the infinite-dimensional Lie group $\mathrm{Diff}_{\mu}(M)$ of volume-preserving diffeomorphisms of $M$, equipped with the right-invariant $L^2$ Riemannian metric. 
Through this discovery, geometric concepts lend themselves to hydrodynamical interpretations.
In particular, since fluid trajectories are geodesics, positive curvature leads to convergence of nearby flows, indicating stability, while negative curvature has the opposite effect, leading to instability. 
Motivated by this perspective, Arnold computed sectional curvatures of $\mathrm{Diff}_{\mu}(M)$ when $M$ is the two-torus, and subsequent work extended these computations to various settings, including both two- and three-dimensional manifolds $M$ (see, e.g.,  \cite{lukatskii1984curvature,rouchon1992thejacobi,misiolek1993stability,preston2005nonpositive,smolentsev2006diffeomorphism}). Collectively, these results show that sectional curvatures of $\mathrm{Diff}_{\mu}(M)$ often, but not always, turn out negative.

Specifically for $\mathrm{Diff}_{\mu}(S^2)$, which is the most important for studying large-scale weather, curvature computations have been performed by many authors in terms of spherical harmonics (see Suri~\cite{Suri} and references therein for a recent explication). Rouchon~\cite{rouchon1992thejacobi} showed that curvatures in sections containing at least one rotation field are always nonnegative, but that for any other velocity field there is a section containing it with negative curvature. The third author showed~\cite{preston2005nonpositive} the opposite, that for every divergence-free velocity field on $S^2$, there is always a section containing it with positive curvature; this is false for most other surfaces. Lukatskii~\cite{lukatskii1988thestructure} considered the curvatures in sections containing $u=\nabla^{\perp}f$ for a function $f(z)$ on $S^2$ and found that in the basis $v_m$ of spherical harmonics, $K(u, v_m) \to -f''(0)^2/\lVert u\rVert^2_{L^2}$, which is the closest result to the present study in the literature and suggests the curvatures are ``mostly negative'' when considered in this basis. It is not at all obvious whether such a result holds in a different basis. On the other hand Suri~\cite{suri2024conjugate} showed that almost every spherical harmonic flow has conjugate points along it, indicating existence of many sections of positive curvature, often found in off-diagonal directions. Hence quantifying the amount of positive or negative curvature is far from clear in this infinite-dimensional setting, particularly in a basis-independent way. Ricci curvature makes this precise in an invariant way but ordinarily only gives finite values in finite-dimensional manifolds\footnote{As a simple example, one can take the sphere of radius $r$ in $L^2(M)$, which has sectional curvature $1/r$ in any plane. Thus, Ricci clearly diverges since there are infinitely many orthogonal planes.}.

As further motivation, in the special case of two-dimensional ideal hydrodynamics, the scalar vorticity field is advected by the geodesic path of diffeomorphisms.
Onsager~\cite{On1949} predicted that regions of equally signed vorticity tend to merge to form vortex condensates.
However, since vorticity is advected, the merging is not via diffusion, but via intricate thinning and folding called \emph{mixing}.
This way, condensates of vorticity develop on large scales at the cost of increasing entanglement on small scales.
The long-time behavior of this dynamics comprises \emph{two-dimensional turbulence} (cf.~Bofetta and Ecke~\cite{BoEc2012}).
But we still lack an ``Arnold-like'' geometric understanding of mixing and vortex condensation, and here Ricci curvature may offer insights.
Indeed, mixing can be interpreted as sensitivity relative to initial data, which in turn is connected to the growth of Jacobi fields. 
More precisely, Ricci curvature governs the evolution of infinitesimal volumes spanned by Jacobi fields, in such a way that negative values accelerate their growth.
In this sense, average Ricci curvature is related to average mixing. Heuristically, we expect that mixing arises from exponential growth of Jacobi fields, driven by negative curvature, with a Lyapunov exponent proportional to the square root of the negative Ricci curvature.

In summary, there are compelling reasons to seek a well-defined Ricci tensor for $\mathrm{Diff}_\mu(M)$, but in order to do this rigorously it is necessary to view it as convergence of Ricci tensors of finite-dimensional geometric approximations.
In this paper, we introduce a notion of Ricci curvature, through quantization, for $\mathrm{Diff}_{\mu}(M)$ when $M$ is the two-sphere $S^2$. Numerical computations suggest that this quantity, defined as a renormalized series over Ricci curvatures in the quantized models, not only converges to a negative value, in line with expectations from the aforementioned sectional curvature computations, but is also given by a remarkably simple formula (Conjecture \ref{conjecture_ricci}). 
Motivated by the statistical long-time behaviour of hydrodynamics on the sphere, resulting in motion that on the large scale is nearly steady up to rigid rotations\footnote{These results are obtained in both numerical simulations~\cite{DrQiMa2015,MoVi2020} and physical experiments~\cite{Co1984}.}, we also compute the Ricci curvature for a quantization model of the homogeneous space $\mathrm{Diff}_\mu(S^2)/\mathrm{SO}(3)$. This homogeneous space is important due to the observation that long-term dynamics of 2D Euler appears to converge to a system on $SO(3)$ with high-frequency terms orthogonal to it fading into noise, suggesting a uniformity of the behavior in the high-frequency quotient space.

The starting point for our definition is the Zeitlin model~\cite{Ze1991}, a sequence of finite-dimensional approximations of the group  $\mathrm{Diff}_{\mu}(S^2)$ of area-preserving diffeomorphisms of the sphere based on a quantization scheme introduced by Hoppe~\cite{Ho1989}. In this model, the special unitary groups $SU(N)$ serve as finite-dimensional analogues of $\mathrm{Diff}_{\mu}(S^2)$ when equipped with a particular right-invariant metric, known as the Zeitlin metric. These groups capture the structure of, and converge to, the area-preserving diffeomorphism group in the large-$N$ limit; for the precise convergence statement for Zeitlin's model on the sphere, see \cite{MoVi2024}.

Our main results begin with a structural theorem: the Ricci tensor of the Zeitlin metric in any dimension is block diagonal with respect to a natural decomposition $\{V_{\ell}\}_{\ell=1}^{N-1}$ of $\mathfrak{su}(N)$ into irreducible $\mathfrak{so}(3)$ modules, acting as a scalar multiple $r_\ell(N)$ of the identity on each subspace $V_\ell$. We derive exact formulas for the values $r_{\ell}(N)$ 
and also determine the Ricci curvature of the homogeneous space $SU(N)/SO(3)$, which corresponds to fluid motion modulo rigid rotations. Finally, we analyze the asymptotic behavior of $r_{\ell}(N)$ as $N \to \infty$. This asymptotic analysis leads naturally to our definition of averaged Ricci curvature for the infinite-dimensional limit. 
In addition it leads to four new conjectured identities for Wigner $6j$ symbols for which there is strong numerical evidence.

We point out that the block Ricci result does not follow from the general theory of ``nice'' bases for Lie groups (cf.~\cite{krishnan2021diagonal}), but relies on the specific form of the Zeitlin metric, which is dictated by the quantized Laplacian. In particular, one can check directly that there exist right-invariant metrics on $SU(N)$ near the Zeitlin metric which do not possess this property. On the other hand, general properties of irreducible representations show that one should expect that the Ricci curvature, as a quadratic form invariant under the isometric action of $SO(3)$, should also be a multiple of the metric on each subspace $V_{\ell}$ by Schur's Lemma; see for example Park-Sakane~\cite{park1997invariant} for a sketch of the argument. Here we verify this statement and more importantly compute the multiples.

Concerning applications in two-dimensional hydrodynamics, our result on positivity of Ricci curvature (see Corollary~\ref{positive_v1_corollary} below) in the lowest wavenumber eigenspace $V_1$ implies that increased angular momentum on average has a stabilizing effect on the dynamics. By contrast, our conjecture that the Ricci curvature is eventually negative (see Conjecture~\ref{conjecture_ricci} below) in all higher eigenspaces hints that on average the sectional curvature is indeed negative, as previously suggested in the literature. 
As a whole, the results are compatible with mixing observed numerically, where the large scales typically settle on a near global rotation (where Ricci is positive), whereas small scales (where Ricci is conjectured negative) are fully mixed \cite{MoVi2020}.

It is possible to carry out this construction for other closed surfaces, such as the torus. An earlier approach, not based on quantization, was proposed by Lukatskii \cite{lukatskii1984curvature} for the flat torus $\mathbb{T}^n$, but without using quantized geometry and hence innately basis-dependent. 
An open problem is thus to derive Ricci curvature as in this paper but for the flat torus  using Zeitlin's quantized model for $\Diff_{\mu}(\mathbb{T}^2)$. 
Another is to extend the results to the axisymmetric 3D Euler equations on $S^3$, which can also be addressed via quantization \cite{MoPr2025}. Ricci curvature was computed there only in the simplest case $N=2$, but for higher $N$ one may expect similar formulas as found here.

This paper is organized as follows. Section \ref{main_results} contains an overview of our  results, including some background.
Section \ref{sec_ricci_finite_N} contains the computation of the Ricci tensor and the proof of Theorem \ref{main_thm}. In Section \ref{section_asymptotic}, we investigate the limiting behavior of Ricci curvature for large $N$, making use of some new conjectured identities for Wigner $6j$ symbols. Finally, in Section \ref{homogeneous_section} we determine the Ricci curvature of the homogeneous space $SU(N)/SO(3)$.

\textbf{Acknowledgements:}
This work was supported by the Swedish Research Council (grant number 2022-03453), the Knut and Alice Wallenberg Foundation (grant numbers WAF2019.0201), and the Göran Gustafsson Foundation for Research in Natural Sciences and Medicine.
The computations were enabled through resources provided by Chalmers e-Commons at Chalmers and by the National Academic Infrastructure for Supercomputing in Sweden (NAISS), partially funded by the Swedish Research Council through grant no.~2022-06725. The first author was supported by the A.J. Sterge Faculty Fellowship. We also thank Akshay Venkatesh for helpful correspondence.

\section{Main results}\label{main_results}

In this section, we state our main results, including only the minimal background needed for the formulation of the theorems. 
A more detailed discussion of the quantization procedure and the structure of the Zeitlin model is deferred to the next section.

\subsection{Block-Einstein structure}

The Lie algebra $\mathfrak{su}(N)$ admits a decomposition
\begin{equation}\label{eq_su_n_main}
\mathfrak{su}(N) = V_1 \oplus \cdots \oplus V_{N-1},
\end{equation}
where each $V_\ell$ is an irreducible representation of $\mathfrak{so}(3)$, has dimension $2\ell + 1$ and carries a distinguished basis 
$\{T_{\ell m} : -\ell \leq m \leq \ell\}$, which can be thought of as a finite-dimensional truncation of the classical spherical harmonics on $S^2$ (cf. \cite{MoVi2024}, \cite{Ze2004}).

The quantized Laplacian $\Delta_N : \mathfrak{su}(N) \rightarrow \mathfrak{su}(N)$ is defined so as to preserve this decomposition and mimic the action of the standard Laplace-Beltrami operator on the two-sphere, assigning eigenvalue $-\ell(\ell+1)$ to each $V_\ell$. Concretely we take a representation of $\mathfrak{so}(3)$ by operators $\soop_i\colon \mathfrak{su}(N)\to\mathfrak{su}(N)$
satisfying $[\soop_i,\soop_j]=\epsilon_{ijk}\soop_k$, and define the Laplacian by $\Delta_N = \sum_i \soop_i^2$, verifying that it commutes with each $\soop_i$. We then construct the special basis $T_{\ell m}$ satisfying $\Delta_N T_{\ell m} = -\ell(\ell+1)T_{\ell m}$ and $\soop_3(T_{\ell m}) = mT_{\ell,-m}$, which uniquely determines the vectors up to scaling for each $m\in \{-\ell,\ldots, \ell\}$. We consider two Riemannian metrics on $SU(N)$: the bi-invariant metric and the Zeitlin metric, the latter being only right-invariant. They are given at the tangent space to the identity by 
\begin{align}
(u, v) &= \mathrm{Tr}\big(u^\dagger v\big), \qquad &\text{(bi-invariant metric)} \label{eq_biinvariant_metric}
\\
\langle u, v\rangle &= \frac{1}{N} \operatorname{Tr}\big(u^\dagger (-\Delta_N) v\big), &\text{(Zeitlin metric)} \label{eq_zeitlin_metric}
\end{align}
for $u, v \in \mathfrak{su}(N)$.
The basis $\{ T_{\ell m} \}$ is scaled so it is orthonormal with respect to the bi-invariant metric.
Furthermore, for the Lie algebra structure, the usual matrix commutator is denoted $[\cdot,\cdot]$.
However, in quantization theory one needs to work with the scaled bracket $\frac{1}{\hbar_N}[\cdot,\cdot]$ where $\hbar_N = 2/\sqrt{N^2-1}$.
The infinitesimal (right) adjoint action is therefore defined with respect to this scaled Lie bracket, so that $$\ad_uv = -\frac{1}{\hbar_N}[u, v].$$

The relevance of Zeitlin's metric lies in the fact that its geodesics provide a finite-dimensional approximation of hydrodynamics on the sphere, as explained later in the next section.
Our first theorem says that the Ricci curvature of Zeitlin's metric~\eqref{eq_zeitlin_metric} is a multiple of the identity when restricted to each subspace $V_{\ell}$, which means that the Zeitlin model is ``block-Einstein.''

\begin{theorem}\label{main_thm}
The Ricci curvature of $SU(N)$ under the Zeitlin metric \eqref{eq_zeitlin_metric} is block diagonal with respect to the decomposition \eqref{eq_su_n_main}. 
That is, the Ricci tensor $\Ric$ acts as a scalar multiple of the metric on each subspace $V_{\ell}$ for $\ell<N$:
for $u\in V_{\ell}$ and $v\in V_{\ell'}$ we have 
\begin{equation*}
    \Ric(u,v) =  
    r_{\ell}(N) \, \langle u,v\rangle. 
\end{equation*}
Furthermore, we have $r_{\ell}(N) = r_{\ell}(N)^+ - r_{\ell}(N)^-$, where $r_{\ell}(N)^{\pm}$ are both nonnegative and given explicitly by 
\begin{gather}\label{eq_ricci_intro}
\begin{split}
r_{\ell}(N)^+ &:= \frac{N}{\hbar_N^2} \sum\limits_{\substack{k, k' = 1 \\ k+k'+\ell\;\mathrm{odd}}}^{N-1} \frac{\lambda_{\ell}}{\lambda_{k}\lambda_{k'} } 
\, (2k+1)(2k'+1)  \left\{\begin{matrix}
\ell & k & k' \\
\frac{N-1}{2} & \frac{N-1}{2} & \frac{N-1}{2}
\end{matrix}
\right\}^2,
\\ 
r_{\ell}(N)^- &:= \frac{N}{\hbar_N^2} \sum\limits_{\substack{k, k' = 1 \\ k+k'+\ell \;\mathrm{odd}}}^{N-1} \frac{  (\lambda_{k}-\lambda_{k'})^2}{\lambda_{k}\lambda_{k'}\lambda_{\ell}} 
\, (2k+1)(2k'+1)  \left\{\begin{matrix}
\ell & k & k' \\
\frac{N-1}{2} & \frac{N-1}{2} & \frac{N-1}{2}
\end{matrix}
\right\}^2.
\end{split}
\end{gather}
Here, $\lambda_i = i(i+1)$, $\hbar_N = 2/\sqrt{N^2-1}$ and $\{:::\}$ denotes the Wigner $6j$ symbol. 
\end{theorem}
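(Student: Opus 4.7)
The plan is to extract the block-Einstein statement from general representation-theoretic principles, and then derive the explicit scalar coefficients $r_\ell(N)$ by combining a standard Ricci formula for right-invariant metrics with the Wigner--Eckart description of the quantized structure constants.

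For the block structure I would argue by Schur's lemma. Because $\Delta_N$ commutes with each $\soop_i$, the Zeitlin inner product is $\mathfrak{so}(3)$-invariant; together with right invariance this makes the $\mathfrak{so}(3)$-action on $SU(N)$ isometric, so the Ricci tensor at the identity is an $\mathfrak{so}(3)$-invariant symmetric bilinear form on $\mathfrak{su}(N)$. Under \eqref{eq_su_n_main}, each $V_\ell$ is the spin-$\ell$ irreducible $\mathfrak{so}(3)$-module and the $V_\ell$ are pairwise inequivalent, so Schur's lemma forces $\Ric$ to vanish between distinct $V_\ell, V_{\ell'}$ and to be a scalar multiple $r_\ell(N)$ of $\langle\cdot,\cdot\rangle|_{V_\ell}$. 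This reduces the problem to evaluating $\Ric(T_{\ell 0}, T_{\ell 0})/\langle T_{\ell 0}, T_{\ell 0}\rangle$.

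For the explicit formula I would invoke the Arnold--Khesin expression for the Ricci curvature of a right-invariant metric, which writes $\Ric(u,u)$ as a trace over an orthonormal basis of quadratic combinations of $\ad_u$ and $\ad_u^{*}$. Setting $\tilde A = -\Delta_N/N$ so that $\langle u,v\rangle = (u,\tilde A v)$ in the bi-invariant trace pairing, a short calculation using skew-adjointness of $[u,\,\cdot\,]$ gives
$$\ad_u^{*} w \;=\; \frac{1}{\hbar_N}\,\tilde A^{-1}\bigl[u,\tilde A w\bigr].$$
On the $V_{k'}$-component of $[u,w]$ for $w \in V_k$ this reads $\tfrac{1}{\hbar_N}\tfrac{\lambda_k}{\lambda_{k'}}([u,w])_{k'}$, so $\ad_u^{*}$ fails to equal $-\ad_u$ precisely by a correction proportional to $(\lambda_k-\lambda_{k'})/\lambda_{k'}$. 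Substituting into the Ricci formula and grouping terms cleaves $\Ric(T_{\ell 0},T_{\ell 0})$ into a manifestly nonnegative ``bi-invariant-like'' piece, contributing $r_\ell(N)^{+}$ with weight $\lambda_\ell/(\lambda_k\lambda_{k'})$ from the Zeitlin-norm factors on the three slots, and a manifestly nonnegative squared-correction piece, contributing $-r_\ell(N)^{-}$ with weight $(\lambda_k-\lambda_{k'})^2/(\lambda_k\lambda_{k'}\lambda_\ell)$ from squaring the adjoint defect and re-expressing it in the Zeitlin norm.

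To evaluate the resulting basis sums I would appeal to the Wigner--Eckart theorem: as $V_k \otimes V_{k'} \to V_\ell$ is $\mathfrak{so}(3)$-equivariant, the bracket factors on basis vectors through a Clebsch--Gordan intertwiner times a scalar reduced matrix element, which in the Hoppe--Zeitlin quantization is proportional to
$$\sqrt{(2k+1)(2k'+1)(2\ell+1)}\;\sixJ{\ell}{k}{k'}.$$
Because all three lower entries coincide, this $6j$ symbol vanishes whenever $k+k'+\ell$ is even, yielding exactly the parity constraint in \eqref{eq_ricci_intro}. Inserting this factorization into the sums for $r_\ell(N)^{\pm}$ with $u=T_{\ell 0}$ and collapsing the $m,m'$ dependence via the Clebsch--Gordan orthogonality $\sum_{m,m'}|C^{\ell''m''}_{km,k'm'}|^{2}=2\ell''+1$ produces precisely the $(2k+1)(2k'+1)$ weights and squared $6j$ symbols in the stated formulas, with the overall $N/\hbar_N^{2}$ coming from one factor of $N$ per Zeitlin-norm slot combined with the two $1/\hbar_N$ coming from each occurrence of $\ad$ or $\ad^{*}$.

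The main technical obstacle is not any single step but the normalization and sign bookkeeping: tracking the $\hbar_N^{-2}$ and $N$ prefactors, the conventions for the reduced matrix elements and Clebsch--Gordan coefficients, and verifying that each summand in the Arnold--Khesin formula lands correctly in $r_\ell(N)^{+}$ versus $-r_\ell(N)^{-}$. The Schur reduction provides a built-in consistency check, since the answer must be independent of which unit vector in $V_\ell$ is used in place of $T_{\ell 0}$.
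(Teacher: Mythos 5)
Your overall route is essentially the paper's: the quantitative core (a standard Ricci formula for a right-invariant metric built from $\ad_u$ and $\adstar_u$, the identity $\adstar_u v=\tfrac{1}{\hbar_N}\Delta_N^{-1}[u,\Delta_N v]$, the quantized structure constants expressed through $3j$ and $6j$ symbols, and angular-momentum orthogonality to collapse the $m$-sums) is exactly how the paper proceeds, and your weights $\lambda_\ell/(\lambda_k\lambda_{k'})$ and $(\lambda_k-\lambda_{k'})^2/(\lambda_k\lambda_{k'}\lambda_\ell)$ match the splitting of the coefficient $\frac{\alpha_\ell\alpha_{\ell'}-(\alpha_k-\alpha_{k'})^2}{4\alpha_k\alpha_{k'}}$ in the paper's Lemma on the Ricci trace formula. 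Your one genuinely different move is to get the block-Einstein structure from Schur's lemma applied to the $\Ad(SO(3))$-invariance of the Zeitlin inner product; that is a valid shortcut (the paper explicitly acknowledges it, citing Park--Sakane), whereas the paper instead obtains diagonality for free from the $3j$ orthogonality relation, which produces the $\delta_{\ell\ell'}\delta_{rr'}$ in the same computation that yields the scalar. Either way the real content is the evaluation of $r_\ell(N)$, which you sketch correctly in outline.

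There is, however, one concretely wrong step: you claim that $\sixJ{\ell}{k}{k'}$ vanishes whenever $k+k'+\ell$ is even ``because all three lower entries coincide,'' and you derive the parity constraint in \eqref{eq_ricci_intro} from this. That is false: for even $i+j+\ell$ these symbols are generically nonzero --- indeed the paper's Remark records the asymptotics $(\mathcal{W}^{ij\ell})^2\simeq \frac{1}{2(N-1)(2\ell+1)}\big(C^{\ell 0}_{i0j0}\big)^2$ precisely in the even case, and the even-parity $6j$'s govern the (nonzero) anticommutator coefficients in the quantization. The parity restriction in the Ricci sum comes instead from the factor $\big(1-(-1)^{\ell+k+k'}\big)$ in the commutator structure constants \eqref{eq_coefficients}: the reduced matrix element of the \emph{antisymmetric} product vanishes for even triples, while the symmetric product carries the even ones. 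So your Wigner--Eckart step must include this antisymmetrization factor in the reduced matrix element, not attribute the selection rule to the $6j$ symbol. Relatedly, the orthogonality you quote, $\sum_{m,m'}|C^{\ell''m''}_{km,k'm'}|^2=2\ell''+1$, holds only after also summing over $m''$; for fixed $(\ell'',m'')$ the sum is $1$ (equivalently $\sum_{m,m'}(3j)^2=1/(2\ell''+1)$), and this factor of $2\ell''+1$ is exactly what cancels the $(2\ell+1)$ from $S_{\ell,k,k'}^2$ to leave the $(2k+1)(2k'+1)$ weights in \eqref{eq_ricci_intro}. These are fixable within your framework, but as written the parity argument is incorrect and the normalization would not close.
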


For a detailed account of Wigner symbols, see \cite{Varshalovich1988quantum}. The quotient space $SU(N)/SO(3)$ has a natural interpretation as the configuration space of fluid motion modulo rigid rotations. We also determine the Ricci curvature of this quotient space explicitly, which turns out to be strictly larger than the Ricci curvature of $SU(N)$ by an amount that depends on the wavenumber, but not the dimension $N$. Since we expect $r_{\ell}(N)$ to grow like $N^2$ for fixed $\ell$ (see Theorem \ref{theorem_3} below), the Ricci curvature of the quotient behaves in essentially the same way as in $SU(N)$ for large $N$.

\begin{theorem}\label{homogeneous_space_thm}
The Ricci tensor $\Ric_B$ of the quotient space $B = SU(N)/SO(3)$, where $SU(N)$ is equipped with the Zeitlin metric \eqref{eq_zeitlin_metric}, acts as a scalar multiple of the metric on each subspace $V_{\ell}$ for $2 \leq \ell \leq N-1$: for $u \in V_{\ell}$ and $v \in V_{\ell'}$ we have 
\begin{gather}
\begin{split}
\Ric_B(u, v) =  
\left(r_{\ell}(N) + \frac{3}{\lambda_{\ell}}\right)\langle u, v \rangle_B
\end{split}
\end{gather}
where $\langle \,,\,\rangle_B$ is the quotient metric on $B$, and $r_{\ell}(N)$ is as in Theorem \ref{main_thm}.
\end{theorem}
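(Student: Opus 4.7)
The natural approach is to realize $\pi\colon SU(N) \to B = SU(N)/SO(3)$ as a Riemannian submersion with totally geodesic fibers and apply O'Neill's formula. Embed $SO(3) \hookrightarrow SU(N)$ so that $\mathfrak{so}(3) = V_1$. Because $\Delta_N = \sum_i \soop_i^2$ is built from the $SO(3)$-representation and therefore commutes with $\Ad(SO(3))$, left translations by $SO(3)$ act as isometries of the Zeitlin metric, and the quotient metric $\langle\cdot,\cdot\rangle_B$ descends. Since $-\Delta_N \equiv \lambda_1 = 2$ on $V_1$, the restriction of the Zeitlin metric to $V_1$ is a positive multiple of the bi-invariant form, which is bi-invariant on $SO(3)$; hence every fiber is totally geodesic. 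O'Neill's formula then gives, for horizontal $X \in \mathfrak{m} := V_2 \oplus \cdots \oplus V_{N-1}$,
\begin{equation*}
    \Ric_B(X,X) \;=\; \Ric(X,X) \;+\; 2\,|A_X|^2 ,
\end{equation*}
where $A_X E = \tfrac{1}{2}\operatorname{pr}_{V_1}[X, E]$ and $|A_X|^2 = \sum_j \|A_X E_j\|_{\langle\cdot,\cdot\rangle}^2$ over a Zeitlin-orthonormal basis $\{E_j\}$ of $\mathfrak{m}$. In view of Theorem~\ref{main_thm}, what remains is to prove that $2|A_X|^2 = (3/\lambda_\ell)\langle X,X\rangle_B$ for $X \in V_\ell$ with $\ell \geq 2$.

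The evaluation of the correction has two steps. First, for $E \in V_k$ the projection $[X, E]_{V_1}$ vanishes unless $k = \ell$: the Clebsch--Gordan triangle inequality $|k-\ell|\leq 1 \leq k+\ell$ combined with the ``$k + \ell + 1$ odd'' parity selection rule (visible in the $6j$-symbol structure in Theorem~\ref{main_thm}) forces $k = \ell$. Second, for $k = \ell$, Parseval's identity together with the bi-invariance identity $((\ad_{T_{1m'}} T_{\ell m_0}, T_{\ell m})) = -((T_{\ell m_0}, \ad_{T_{1m'}} T_{\ell m}))$ yields
\begin{equation*}
    \sum_m \|[T_{\ell m_0}, T_{\ell m}]_{V_1}\|_{\mathrm{BI}}^2 \;=\; \sum_{m'} \|[T_{1m'}, T_{\ell m_0}]\|_{\mathrm{BI}}^2 ,
\end{equation*}
which is the Casimir value of $\mathfrak{so}(3)$ on $V_\ell$ expressed in the bi-invariantly unit-normalized basis $\{T_{1m'}\}$. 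The critical calibration is that $\{T_{1m'}\}$ is not the $\mathfrak{so}(3)$-standard basis: computing the Killing form of $\mathfrak{su}(N)$ on $V_1$ through the decomposition $\mathfrak{su}(N) = \bigoplus_\ell V_\ell$ produces $[T_{1m'}, T_{1m''}] = \pm\hbar_N\sqrt{3/N}\,\epsilon_{m'm''m'''}T_{1m'''}$, hence Casimir value $3\hbar_N^2\lambda_\ell/N$. Propagating this through the $\lambda_\ell/N$ Zeitlin/bi-invariant conversion on $V_\ell$ and the $\hbar_N$-rescaled bracket convention defining $\ad$, one obtains exactly $2|A_X|^2 = (3/\lambda_\ell)\langle X,X\rangle_B$, with the dimensional constant $3 = \dim V_1 = \dim SO(3)$ emerging naturally from the Casimir sum.

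The main difficulty is the bookkeeping of the three coexisting normalizations -- bi-invariant, Zeitlin, and the scaled Lie bracket -- in such a way that the final correction $3/\lambda_\ell$ turns out independent of $N$. Once the Casimir identity on $V_1$ is correctly calibrated and combined with Theorem~\ref{main_thm}, Theorem~\ref{homogeneous_space_thm} follows directly.
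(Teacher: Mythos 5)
Your proposal is correct in substance but follows a genuinely different route from the paper, and one step is not justified as written. On the comparison: the paper never uses (and never needs) totally geodesic fibers. It applies O'Neill's formula \eqref{eq_submersion_curvature} only to horizontal planes, sums over a horizontal basis, and separately removes the vertizontal contributions present in $\Ric$ but absent from $\Ric_B$ by computing $\sum_{m'}K_G(T_{\ell m},T_{1m'})=\tfrac{3}{2\lambda_\ell}$ (Lemma~\ref{lemma_kg}); the quantitative input is the closed-form $3j$/$6j$ evaluation of Lemma~\ref{lemma_3_sum}, fed into Lemma~\ref{lemma_A_XY}. You instead invoke the Ricci-level submersion identity $\Ric_B(X,X)=\Ric(X,X)+2\lvert A_X\rvert^2$, valid only when the fibers are totally geodesic (otherwise $T$- and mean-curvature terms enter), and evaluate $2\lvert A_X\rvert^2$ by an invariant Casimir argument. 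Your ingredients do check out: the parity-plus-triangle selection rule indeed forces $k=\ell$; the Parseval/ad-invariance identity is correct once one notes $[V_1,V_\ell]\subseteq V_\ell$, so the $V_\ell$-projection loses nothing; and your calibration is right, since generators $L_i$ with $[L_i,L_j]=\hbar_N\epsilon_{ijk}L_k$ satisfy $\Tr(L_i^\dagger L_i)=N/3$, giving $\sum_{m'}\big([T_{1m'},v],[T_{1m'},v]\big)=3\hbar_N^2\lambda_\ell/N$ for bi-invariantly unit $v\in V_\ell$ -- exactly the content of Lemma~\ref{lemma_3_sum}. Unwinding the normalizations gives $2\lvert A_X\rvert^2=3/N=(3/\lambda_\ell)\langle X,X\rangle$, matching the paper's $-\tfrac{3}{2N}+\tfrac{9}{2N}$. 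Your route buys a representation-theoretic explanation of the constant $3$ and a single application of the submersion formula; the paper's route buys freedom from any hypothesis on the fibers.

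The gap is your justification of total geodesy: bi-invariance of the metric induced on the subgroup $SO(3)$ is an intrinsic statement and implies nothing about its second fundamental form inside $(SU(N),\langle\cdot,\cdot\rangle)$ -- every one-parameter subgroup trivially carries a bi-invariant metric, yet one-parameter subgroups are generically not geodesics of a one-sided invariant metric. The correct argument is available from Lemma~\ref{covderivprop}: since $V_1$ is simultaneously a Lie subalgebra and a $\Delta_N$-eigenspace, for $u,v\in V_1$ all of $\ad_uv$, $\adstar_uv$, $\adstar_vu$ lie in $V_1$, hence $\nabla_uv\in V_1$ and the fiber through the identity is totally geodesic. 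You must then also upgrade this to \emph{all} fibers (the Ricci identity needs the $T$-tensor and the mean-curvature term to vanish identically, not just at one point): take the quotient by the rotation action acting by left translations, which are isometries because $\Ad$ of $SO(3)$ preserves $\Delta_N$; the fibers are then the right-translates $R_g(SO(3))$ of the identity fiber by isometries of the right-invariant Zeitlin metric, hence all totally geodesic and minimal. With this repair your argument is complete and reproduces the paper's conclusion.
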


Positivity of Ricci curvature in some special directions, and all dimensions, will follow easily from the formulas in Theorem \ref{main_thm}. 

\begin{corollary}\label{positive_v1_corollary}
For every $N \geq 2$, the Ricci curvature of the Zeitlin metric on $SU(N)$ is strictly positive in the subspace $V_1$.
\end{corollary}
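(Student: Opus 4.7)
The plan is to apply Theorem~\ref{main_thm} with $\ell = 1$ and show separately that the negative part $r_1(N)^-$ vanishes identically, while the positive part $r_1(N)^+$ is strictly positive.

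The first step is a selection-rule argument for the Wigner $6j$ symbol. The symbol $\sixJ{1}{k}{k'}$ is nonzero only when the triple $(1,k,k')$ satisfies the triangle inequality $|k-k'|\le 1$, so $k' \in \{k-1,k,k+1\}$. The summation constraint $k+k'+1$ odd forces $k+k'$ to be even, i.e.\ $k$ and $k'$ have the same parity; together with the triangle rule this leaves only the diagonal contribution $k = k'$. In other words, for $\ell=1$ the double sum collapses to a single index.

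Next, I would substitute $k' = k$ into the two formulas of Theorem~\ref{main_thm}. The factor $(\lambda_k - \lambda_{k'})^2$ in the integrand of $r_1(N)^-$ vanishes on the diagonal, so every surviving term is zero and $r_1(N)^- = 0$ identically. The same substitution reduces $r_1(N)^+$ to
\begin{equation*}
r_1(N)^+ = \frac{2 N}{\hbar_N^2}\sum_{k=1}^{N-1} \frac{(2k+1)^2}{\lambda_k^2}\,\sixJ{1}{k}{k}^{2},
\end{equation*}
which is a sum of nonnegative quantities.

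To conclude strict positivity it is enough to exhibit one nonvanishing $6j$ symbol in this sum. I would verify non-vanishing of $\sixJ{1}{1}{1}$ with lower row $(\tfrac{N-1}{2},\tfrac{N-1}{2},\tfrac{N-1}{2})$ for every $N \ge 2$; all four triangle inequalities are strictly satisfied, and Racah's closed-form expression for $6j$ symbols with a unit argument yields a nonzero value. Hence $r_1(N)^+ > 0 = r_1(N)^-$, so $r_1(N) > 0$ by Theorem~\ref{main_thm}, proving the corollary. The only step that is not entirely formal is the non-vanishing of the $6j$ symbol, which is routine but is the single external input the proof relies on.
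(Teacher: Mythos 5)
Your proposal is correct and follows essentially the same route as the paper: the parity constraint plus the $6j$ triangle rule collapses the $\ell=1$ sums to the diagonal $k=k'$, killing $r_1(N)^-$ via the $(\lambda_k-\lambda_{k'})^2$ factor, and strict positivity of $r_1(N)^+$ comes from the explicit nonzero value of $\sixJ{1}{1}{1}$ (the paper quotes $\bigl(\mathcal{W}^{111}\bigr)^2 = \tfrac{2}{3N(N^2-1)}$ from Varshalovich). Only a cosmetic quibble: for $N=2$ the triad $\bigl(1,\tfrac12,\tfrac12\bigr)$ satisfies the triangle condition with equality rather than strictly, but this is immaterial since the closed-form value is still nonzero.
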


\subsection{Conjectured identities and numerical results}

To understand what happens in the eigenspaces $V_{\ell}$ for $\ell > 1$ as $N \rightarrow \infty$, we must work with Wigner $6j$ symbols, whose explicit combinatorial expressions are extremely long. In the course of computing Ricci curvature, we have observed certain recurring patterns involving sums of these symbols, which led us to formulate some new conjectural identities. 

To begin, for fixed $N$, we introduce the abbreviated notation below for certain $6j$ symbols that appear frequently throughout the paper:
\begin{equation}\label{eq_sixj_abbrev}
\mathcal{W}^{ij\ell} = \sixJ{i}{j}{\ell}, \qquad \mathcal{W}_j^i = \left\{\begin{matrix}
i & \frac{N-1}{2} & \frac{N-1}{2}
\vspace{0.15cm} \\ 
j & \frac{N-1}{2} & \frac{N-1}{2}
\end{matrix}
\right\}.
\end{equation}
The following summation formulas are well known from the theory of angular momentum in quantum mechanics, where $6j$ symbols play a fundamental role (see \cite{Varshalovich1988quantum}). 
\begin{align}
\sum\limits_{i = 0}^{N-1} (2i+1)(\mathcal{W}^{ij\ell})^2
&= \frac{1}{N}, \label{eq_sum_6j}
\\
\sum\limits_{i = 0}^{N-1}
(-1)^i(2i+1)(\mathcal{W}^{ij\ell})^2
&= (-1)^{N+1}\mathcal{W}_j^{\ell}, \label{eq_sum_6j_2}
\\ 
\sum\limits_{i = 0}^{N-1} (-1)^i(2i+1)
\mathcal{W}_j^i &= (-1)^{N+1}\delta_{j0}N.
\label{eq_sum_alternating}
\end{align}

We now list the identities observed in our computations, which seem to extend the  above in a natural way.

\begin{conjecture}\label{conj_new_formulas}
For all $1 \leq j, \ell \leq N-1$, we have
\begin{align}
\sum\limits_{i = 1}^{N-1} \frac{(2i+1)}{\lambda_{i}}
(\mathcal{W}^{ij\ell})^2 &= \frac{1}{N|j - \ell|(j + \ell + 1)}, \qquad (j\neq \ell), \label{intro_alt_lambda_small} \\
\sum\limits_{i = 1}^{N-1} (-1)^{i} \lambda_{i}(2i+1)
(\mathcal{W}^{ij\ell})^2 &=(-1)^{N+1}(\lambda_j + \lambda_{\ell})\mathcal{W}_j^{\ell}, \label{intro_alt_lambda} \\
\sum\limits_{i = 1}^{N-1} \lambda_{i}(2i+1)
(\mathcal{W}^{ij\ell})^2 &= \frac{(N^2-1)\big(\lambda_j+\lambda_{\ell}\big) - 2\lambda_j\lambda_{\ell}}{N(N^2-1)}, \label{intro_non_alt_lambda} \\
\sum\limits_{i = 1}^{N-1}\frac{2i+1}{\lambda_i}\left(\frac{1}{N} + (-1)^{i+j+N}\mathcal{W}_j^i\right) &= \frac{2H_j}{N},\label{intro_eq_harmonic}
\end{align}
where $\lambda_i = i(i+1)$ as before and $H_j = 1 + 1/2 + \cdots + 1/j$ is the $j^{\text{th}}$ harmonic number.
\end{conjecture}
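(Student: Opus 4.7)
The plan is to reduce each identity to a trace computation involving the spin-$\tfrac{N-1}{2}$ representation of $\mathfrak{so}(3)$, exploiting the fact that $\mathcal{W}^{ij\ell}$ is the recoupling coefficient for three copies of that irreducible, or equivalently (up to Clebsch--Gordan factors) a structure constant of the quantized commutator $[T_{jm}, T_{\ell m'}]$. In this setting the weights $(2i+1)$, $\lambda_i$, and $(-1)^i$ correspond to dimension counting, Casimir insertion, and the exchange operator on the two spin-$\tfrac{N-1}{2}$ factors; the known sum rules \eqref{eq_sum_6j}--\eqref{eq_sum_alternating} are the trace analogues of these basic operations. The strategy for three of the four identities is to evaluate the relevant trace in two different ways: once by summing over the coupled intermediate label $i$, and once by direct manipulation on the uncoupled tensor product.

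For identity \eqref{intro_non_alt_lambda}, the weight $\lambda_i$ is the Casimir eigenvalue on the intermediate channel of spin $i$, and the decomposition $C_{\mathrm{tot}} = C_1 + C_2 + 2 C_1 \cdot C_2$ yields the two diagonal contributions $\lambda_j$ and $\lambda_\ell$ together with a cross term that evaluates, via a standard Racah sum, to a correction proportional to $-\lambda_j\lambda_\ell/(N^2-1)$. Identity \eqref{intro_alt_lambda} follows by running the same argument with an exchange operator on the two spin-$\tfrac{N-1}{2}$ factors inserted, which produces the sign $(-1)^i$ on the intermediate channel and, after applying the Biedenharn--Elliott (pentagon) identity, collapses the resulting triple sum into the single symbol $\mathcal{W}_j^\ell$. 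Identity \eqref{intro_eq_harmonic} I would attack by induction on $j$: the $1/N$ inside the parentheses is precisely the counterterm from \eqref{eq_sum_6j} that renders the bracket summable against $1/\lambda_i$, while the harmonic increment $H_j - H_{j-1} = 1/j$ should match a contiguous relation shifting the $j$-argument of $\mathcal{W}_j^i$, with the base case $j=0$ holding by \eqref{eq_sum_alternating}.

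The principal obstacle is identity \eqref{intro_alt_lambda_small}, since $1/\lambda_i$ has no natural Casimir or exchange interpretation. Observing that the right-hand side equals $1/\bigl(N|\lambda_\ell - \lambda_j|\bigr)$ suggests that the identity is the $\mu = 0$ specialization of a resolvent-type formula
\begin{equation*}
\sum_{i=1}^{N-1} \frac{(2i+1)(\mathcal{W}^{ij\ell})^2}{\lambda_i - \mu} = \frac{\Phi(\mu)}{N\bigl(\lambda_\ell - \lambda_j - \psi(\mu)\bigr)}
\end{equation*}
with explicit $\Phi, \psi$ satisfying $\Phi(0) = 1$ and $\psi(0) = 0$, which would itself be an attractive structural statement. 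My plan is to decompose $1/\lambda_i = 1/i - 1/(i+1)$ and invoke the contiguous relations for $6j$ symbols that shift the top-left entry by $\pm 1$, aiming for a telescoping sum; failing that, to rewrite $(\mathcal{W}^{ij\ell})^2$ as a balanced ${}_4F_3$ and attempt a Whipple-type transformation adapted to the $1/\lambda_i$ weight. Both approaches are delicate, and it is quite possible that a genuinely new sum rule for $6j$ symbols is required, which is precisely why these identities are stated as conjectures in the first place.
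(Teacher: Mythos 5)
You should first be aware that the paper does not prove this statement at all: it is stated as a conjecture, supported only by exact numerical verification up to $N=2048$, and the authors explicitly defer a proof to future ``combinatorial methods.'' So there is no proof in the paper to match your approach against; the relevant question is whether your proposal itself closes the gap, and as written it does not. It is a research plan with the hard steps asserted rather than executed. For \eqref{intro_non_alt_lambda} the Casimir decomposition $C_{\mathrm{tot}}=C_1+C_2+2C_1\cdot C_2$ is a sensible device (and this identity is probably the most accessible of the four), but the claim that the cross term ``evaluates, via a standard Racah sum'' to $-2\lambda_j\lambda_\ell/\bigl(N(N^2-1)\bigr)$ is exactly the content of the identity and is nowhere computed. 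Similarly for \eqref{intro_alt_lambda}: the exchange-operator idea is consistent with the answer (the swap of the two spin-$\tfrac{N-1}{2}$ factors acts by $(-1)^{N-1-i}$ on the spin-$i$ channel, which accounts for the $(-1)^{N+1}$), but the asserted Biedenharn--Elliott collapse of the resulting sum into the single symbol $\mathcal{W}_j^{\ell}$ with prefactor $\lambda_j+\lambda_\ell$ is not carried out, and that collapse is the whole theorem.

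The more serious structural problem is with the two identities carrying the weight $1/\lambda_i$. For \eqref{intro_alt_lambda_small} you candidly have no argument, only two speculative directions (a resolvent deformation and a ${}_4F_3$ transformation). But your induction plan for \eqref{intro_eq_harmonic} does not escape the same difficulty: the inductive step $j-1\to j$ requires
\begin{equation*}
\sum_{i=1}^{N-1}\frac{(2i+1)(-1)^{i+j+N}}{\lambda_i}\Bigl(\mathcal{W}_j^i+\mathcal{W}_{j-1}^i\Bigr)=\frac{2}{Nj},
\end{equation*}
which is again a $1/\lambda_i$-weighted sum and is not one of the standard contiguous (three-term) relations for $6j$ symbols; shifting $j$ by one does not produce a telescoping structure against $1/\lambda_i=1/i-1/(i+1)$, because the contiguous relations shift arguments of the symbol, not the summation index $i$ that $\lambda_i$ depends on. (Your base case is fine, though it follows from the explicit value $\mathcal{W}_0^i=(-1)^{N-1+i}/N$ rather than from \eqref{eq_sum_alternating}.) So the proposal, like the paper, leaves all four identities unproved; its value is as a plausible roadmap for \eqref{intro_alt_lambda} and \eqref{intro_non_alt_lambda}, while the $1/\lambda_i$ identities \eqref{intro_alt_lambda_small} and \eqref{intro_eq_harmonic} genuinely seem to need a new sum rule, which is precisely why the authors list them as conjectures.
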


These identities have been numerically verified up to $N = 2048$, which already covers the range relevant for practical simulations using the Zeitlin model. Note that since the square of a Wigner $6j$ symbol is a rational number, numerical verification is exact here.

For the values of $N$ where Conjecture \ref{conj_new_formulas} holds, we are able to get a closed form expression for the negative part of Ricci, $r_{\ell}(N)^-$, and an explicit upper bound for $r_{\ell}(N)^+$. If there were identities like \eqref{intro_alt_lambda_small} involving $(-1)^i$ or the case $j=\ell$, we would get an exact formula for $r_{\ell}(N)^+$, but we have not found these.

\begin{theorem}\label{theorem_3}
Assuming the identities in Conjecture \ref{conj_new_formulas}, the positive and negative parts of the Ricci curvature in the $V_{\ell}$ subspace satisfy: 
\begin{gather}\label{eq_ricci_thm3}
\begin{split}
r_{\ell}(N)^- &= \frac{2(H_{\ell}-1)}{\hbar_N^2}
=
2(H_{\ell}-1)\left(\frac{N^2-1}{4}\right), \\
r_{\ell}(N)^+
&\leq
(4H_{\ell} + 2\ell+1) \left(\frac{N^2-1}{4}\right). 
\end{split}
\end{gather}
\end{theorem}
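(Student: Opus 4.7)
The plan is to handle the parity constraint $k+k'+\ell$ odd via
\[ \mathbf{1}_{k+k'+\ell \text{ odd}} = \tfrac{1}{2}\bigl(1-(-1)^{k+k'+\ell}\bigr), \]
splitting each double sum into an unconstrained and an alternating part. Throughout, I would perform the inner sum over $k$ first using an identity from Conjecture~\ref{conj_new_formulas} or from \eqref{eq_sum_6j}--\eqref{eq_sum_alternating}, then simplify the outer sum in $k'$ using the telescoping partial fraction $\tfrac{2k'+1}{\lambda_{k'}} = \tfrac{1}{k'}+\tfrac{1}{k'+1}$. A key algebraic observation is $|k'-\ell|(k'+\ell+1) = |\lambda_{k'}-\lambda_\ell|$.

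For $r_\ell(N)^-$, exploiting the symmetry $(\mathcal{W}^{kk'\ell})^2 = (\mathcal{W}^{k'k\ell})^2$ together with the expansion $\tfrac{(\lambda_k-\lambda_{k'})^2}{\lambda_k\lambda_{k'}\lambda_\ell}=\tfrac{\lambda_k}{\lambda_{k'}\lambda_\ell}+\tfrac{\lambda_{k'}}{\lambda_k\lambda_\ell}-\tfrac{2}{\lambda_\ell}$ collapses the computation to $r_\ell(N)^- = (2N/\hbar_N^2)(S-T/\lambda_\ell)$, where
\[ S = \sum_{\text{odd}}\tfrac{\lambda_k(2k+1)(2k'+1)}{\lambda_{k'}\lambda_\ell}(\mathcal{W}^{kk'\ell})^2 \quad\text{and}\quad T = \sum_{\text{odd}}(2k+1)(2k'+1)(\mathcal{W}^{kk'\ell})^2. \]
For $T$, applying \eqref{eq_sum_6j} to the unconstrained inner sum and \eqref{eq_sum_6j_2} together with \eqref{eq_sum_alternating} to the alternating one (while tracking the boundary contribution $\sixJ{0}{\ell}{\ell}^2 = 1/[(2\ell+1)N]$) yields $T = N/2$. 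For $S$, identity \eqref{intro_non_alt_lambda} evaluates the unconstrained inner sum while \eqref{intro_alt_lambda} evaluates the alternating one, leaving outer sums resolved by the telescoping partial fraction combined with \eqref{eq_sum_alternating} and, crucially, \eqref{intro_eq_harmonic}, which is where the harmonic number $H_\ell$ enters. The result is $S = N/(2\lambda_\ell) + (H_\ell - 1)/N$, so the $N/\lambda_\ell$ pieces cancel between $S$ and $T/\lambda_\ell$, giving $r_\ell(N)^- = 2(H_\ell-1)/\hbar_N^2$.

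For $r_\ell(N)^+$, since every summand is nonnegative, dropping the parity constraint yields the upper bound
\[ r_\ell(N)^+ \leq \tfrac{N\lambda_\ell}{\hbar_N^2}\sum_{k,k'=1}^{N-1}\tfrac{(2k+1)(2k'+1)}{\lambda_k\lambda_{k'}}(\mathcal{W}^{kk'\ell})^2. \]
Identity \eqref{intro_alt_lambda_small} evaluates the inner sum over $k$ to $1/[N|\lambda_{k'}-\lambda_\ell|]$ for $k'\neq \ell$. The diagonal case $k'=\ell$ is not covered, so I would bound its inner sum by $1/(2N)$ using $(2k+1)(\mathcal{W}^{k\ell\ell})^2 \leq 1/N$ (from \eqref{eq_sum_6j} and nonnegativity) together with $\lambda_k\geq 2$, contributing at most $(2\ell+1)(N^2-1)/8$. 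For the off-diagonal contribution, the decomposition $\tfrac{2k'+1}{\lambda_{k'}-\lambda_\ell} = \tfrac{1}{k'-\ell}+\tfrac{1}{k'+\ell+1}$ for $k'>\ell$ (and its analog for $k'<\ell$), combined with $(2k'+1)/\lambda_{k'}=1/k'+1/(k'+1)$, telescopes the outer $k'$-sum to a linear combination of harmonic numbers up to $H_{2\ell+1}$; using $H_{2\ell}\geq H_\ell$ and the fact that the tail pieces $(H_{N+\ell}-H_N)+(H_{N-1-\ell}-H_{N-1})$ are a sum of $\ell$ strictly negative terms $1/(N+k)-1/(N-k)$, the off-diagonal contribution is bounded by $H_\ell(N^2-1)$. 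Adding the diagonal bound yields the claimed inequality.

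The main obstacle is the exact cancellation of the $N/\lambda_\ell$ pieces in the $r_\ell^-$ computation: without the new identity \eqref{intro_eq_harmonic} the leading behavior would scale as $N^3$ rather than $N^2$, so \eqref{intro_eq_harmonic} is essential not only for the $H_\ell-1$ coefficient but even for the correct order of growth. A secondary delicacy is tracking the $k=0$ boundary contributions from $\sixJ{0}{\ell}{\ell}$, whose sign $(-1)^{\ell+N-1}$ must combine consistently with the $(-1)^{N+1}$ signs in \eqref{eq_sum_6j_2} and \eqref{intro_alt_lambda} for the cancellations to work out.
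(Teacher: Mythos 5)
Your proposal is correct (modulo Conjecture \ref{conj_new_formulas}, exactly as the theorem assumes) and follows essentially the same route as the paper: the parity indicator $\tfrac12(1-(-1)^{k+k'+\ell})$ to split the double sums, the identities \eqref{intro_non_alt_lambda}, \eqref{intro_alt_lambda}, \eqref{intro_eq_harmonic} together with \eqref{eq_sum_6j}--\eqref{eq_sum_alternating} for the exact evaluation of $r_\ell(N)^-$, and for $r_\ell(N)^+$ the identity \eqref{intro_alt_lambda_small} off the diagonal plus a partial-fraction/harmonic-number estimate (your $4H_\ell$) and a crude \eqref{eq_sum_6j}-based bound for the diagonal term. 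The only deviations are cosmetic: you evaluate the boundary symbol $\mathcal{W}_0^{\ell}$ explicitly (so your $S$, $T$ come out as clean closed forms) where the paper lets those terms cancel between its $2A$ and $2B$, and your diagonal bound $(2\ell+1)(N^2-1)/8$ is slightly sharper than needed.
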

Remarkably, this formula -- derived as a  complicated sum involving Wigner $6j$ 
 symbols -- factors into a product of two terms, one depending only on $\ell$ and the other only on $N$. 

In light of Theorem \ref{theorem_3}, it is natural to introduce the averaged Ricci curvatures, based on dividing by $\dim(SU(N)) = N^2-1$:
\begin{equation}
\widetilde{r}_{\ell}(N) = 
\frac{r_{\ell}(N)}{N^2-1}, 
\qquad \widetilde{r}_{\ell}(N)^{\pm} = \frac{r_{\ell}(N)^{\pm}}{N^2-1}.
\end{equation}
Numerical computations up to $N = 2048$ indicate that $\widetilde{r}_{\ell}(N)$ is not only bounded in $N$, as expected from Theorem \ref{theorem_3}, but in fact converges to the limit of $\widetilde{r}_{\ell}(N)^-$ as $N$ goes to infinity -- see Figure~\ref{fig:r_ell_1_23_N} below. This is equivalent to the statement that $\widetilde{r}_{\ell}(N)^+ \rightarrow 0$ as $N \rightarrow \infty$.

\begin{figure}[H]
    \centering
    \includegraphics{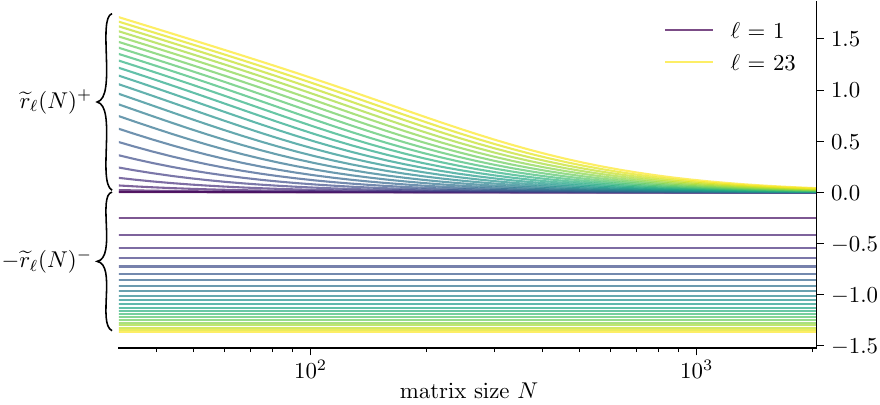}
    \caption{Positive $\widetilde{r}_\ell(N)^+$ and negative $\widetilde{r}_\ell(N)^-$ parts of the average Ricci curvature values in the Zeitlin model, for $\ell=1,\ldots,23$ and for matrix sizes $N$ from 32 to 2048 (in log-scale). 
    The negative part is independent of $N$, whereas the positive part seemingly converge to zero.}\label{fig:r_ell_1_23_N}
\end{figure}

\begin{figure}[H]
    \centering
    \includegraphics{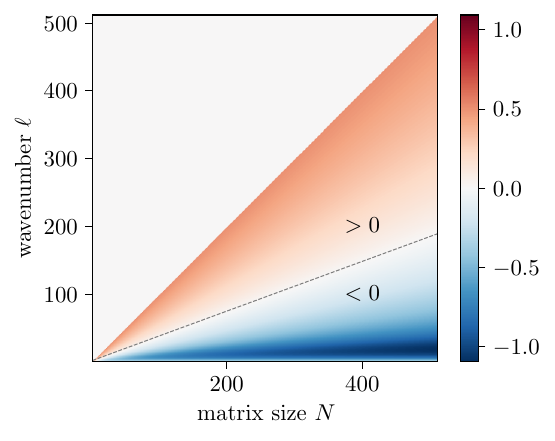}
    \caption{Average Ricci curvature values $\widetilde{r}_{\ell}(N)$ in the Zeitlin model for varying matrix sizes $N$. For a fixed $\ell$, the averaged Ricci curvature eventually becomes negative as $N$ grows, in accordance with Conjecture~\ref{conjecture_ricci}.
    On the other hand, as soon as $\ell>N/3$ the Ricci curvature in the Zeitlin model is positive. The slope $1/3$ is not clearly predicted even in our conjectured formulas, but is quite apparent graphically.}\label{fig:r_ell_N}
\end{figure}

\begin{center} 
\scalebox{0.9}{
\begin{tabular}{ |c|c|c|c|c|c|c|c|c|c|c|c|c|c|c|c|c| } 
 \hline
   \diagbox[innerwidth=1.8em]{$\ell$}{$N$}  & $3$ & $4$ & $5$ & $6$ & $7$ & $8$ & $9$ & $10$ & $11$ & $12$ & $13$ & $14$ & $15$ & $16$ \\ 
 \hline
 $1$  & \p & \p & \p & \p & \p & \p & \p & \p & \p & \p & \p & \p & \p & \p \\ 
 \hline
 $2$  & \p & \p & \p & \p & \n & \n & \n & \n & \n & \n & \n & \n & \n & \n \\ 
 \hline
 $3$  & & \p & \p & \p & \p & \p & \n & \n & \n & \n & \n & \n & \n & \n \\ 
 \hline
 $4$  & & & \p & \p & \p & \p & \p & \p & \n & \n & \n & \n & \n & \n \\ 
 \hline
 $5$  & & & & \p & \p & \p & \p & \p & \p & \p & \n & \n & \n & \n \\ 
 \hline
 $6$  & & & & & \p & \p & \p & \p & \p & \p & \p & \p & \p & \n \\ 
 \hline
 $7$  & & & & & & \p & \p & \p & \p & \p & \p & \p & \p & \p \\ 
 \hline
 $8$  & & & & & & & \p & \p & \p & \p & \p & \p & \p & \p \\ 
 \hline
 $9$  & & & & & & & & \p & \p & \p & \p & \p & \p & \p \\ 
 \hline
 $10$  & & & & & & & & & \p & \p & \p & \p & \p & \p \\ 
 \hline
 $11$  & & & & & & & & & & \p & \p & \p & \p & \p \\ 
 \hline
 $12$  & & & & & & & & & & & \p & \p & \p & \p \\ 
 \hline
 $13$  & & & & & & & & & & & & \p & \p & \p \\ 
 \hline
 $14$  & & & & & & & & & & & & & \p & \p \\ 
 \hline
 $15$  & & & & & & & & & & & & & & \p \\ 
 \hline
\end{tabular}}
\end{center}

\medskip

The table above shows the sign of the Ricci curvature in the direction of each eigenspace $V_\ell$ for $SU(N)$, with $\p$ and $\n$ denoting positive and negative curvature, respectively. The pattern mirrors the one observed in Figure \ref{fig:r_ell_N}, and persists for much higher $\ell$ and $N$. One can also observe the transition pattern $\ell/N \approx 1/3$ in this table.

\bigskip
Summarizing the above discussion and numerical results, we expect the following to hold.

\begin{conjecture}\label{conjecture_ricci}
For each fixed $\ell > 1$, the averaged Ricci curvature $\widetilde{r}_{\ell}(N)$ of the Zeitlin metric on $SU(N)$ becomes negative for sufficiently large $N$, and in the limit
\begin{equation}
\widetilde{r}_{\ell}(N) \rightarrow -\frac{H_{\ell}-1}{2}, \quad \text{as $N \rightarrow \infty$}, 
\end{equation}
where $H_{\ell}$ is the $\ell^{\text{th}}$ harmonic number.
\end{conjecture}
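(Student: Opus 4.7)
My proof plan rests crucially on Theorem \ref{theorem_3}: assuming the $6j$ identities of Conjecture \ref{conj_new_formulas}, the negative part is given \emph{exactly} by
$$\widetilde{r}_\ell(N)^- = \frac{H_\ell - 1}{2},$$
independent of $N$. Hence Conjecture \ref{conjecture_ricci} is equivalent to the sharper asymptotic statement
$$\widetilde{r}_\ell(N)^+ \longrightarrow 0 \quad \text{as } N \to \infty,$$
which strengthens the merely $O(1)$ upper bound of Theorem \ref{theorem_3}. The entire task therefore reduces to controlling the positive part.

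Writing
$$\widetilde{r}_\ell(N)^+ = \frac{N\lambda_\ell}{4}\sum_{\substack{k,k'=1 \\ k+k'+\ell\ \text{odd}}}^{N-1} \frac{(2k+1)(2k'+1)}{\lambda_k\,\lambda_{k'}}\,(\mathcal{W}^{\ell k k'})^2,$$
I would first resolve the parity constraint via $\mathbf{1}_{k+k'+\ell\ \text{odd}} = \tfrac{1}{2}\bigl(1-(-1)^{k+k'+\ell}\bigr)$, splitting the double sum into a non-alternating piece and an alternating piece. For the non-alternating piece, identity \eqref{intro_alt_lambda_small} performs the inner sum over $k\neq \ell$ in closed form, reducing it to a convergent series in $k'$ times a factor of $1/N$, plus a diagonal term $k=\ell$ that must be estimated separately using asymptotics of $(\mathcal{W}^{\ell\ell k'})^2$ for large $N$. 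This by itself produces only an $O(1)$ bound, so the answer must come from cancellation with the alternating piece.

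For the alternating piece, no closed-form identity is yet available. The plan is to seek a new alternating analogue of \eqref{intro_alt_lambda_small}, of the form
$$\sum_{i=1}^{N-1}(-1)^{i}\,\frac{2i+1}{\lambda_i}\,(\mathcal{W}^{ij\ell})^2 = (-1)^{N+1}\,F(j,\ell,N),$$
patterned on the relationship between \eqref{intro_alt_lambda} and \eqref{intro_non_alt_lambda}. Combined with a telescoping via the harmonic-number identity \eqref{intro_eq_harmonic}, this should produce an exact cancellation of the leading $O(1)$ behavior of the non-alternating piece, yielding the $o(1)$ decay predicted by Conjecture \ref{conjecture_ricci}; the reappearance of $H_\ell$ through \eqref{intro_eq_harmonic} is consistent with the harmonic number appearing in $\widetilde{r}_\ell(N)^-$.

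The main obstacle is precisely this last step: identifying and proving such a fifth $6j$ identity. A plausible alternative route is to treat the stretched $6j$ symbols with three equal bottom entries $\tfrac{N-1}{2}$ via Ponzano--Regge or Racah-polynomial asymptotics and apply a stationary-phase analysis to extract the required $1/N$ cancellation directly from the alternating sum, with error terms uniform in $k,k'$ and summable against $\lambda_k\lambda_{k'}$ in the denominator. Either approach requires genuinely new analytic input beyond Conjecture \ref{conj_new_formulas}, and in both cases the diagonal contributions $k=\ell$ or $k'=\ell$ (where \eqref{intro_alt_lambda_small} fails) demand an independent asymptotic treatment that is the most delicate technical point of the argument.
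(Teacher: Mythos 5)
First, note that the statement you are addressing is a \emph{conjecture}: the paper itself offers no proof, only numerical evidence up to $N=2048$ together with the reduction provided by Theorem~\ref{theorem_3}. Your plan reproduces exactly that reduction: since (assuming Conjecture~\ref{conj_new_formulas}) $\widetilde{r}_\ell(N)^- = (H_\ell-1)/2$ exactly and independently of $N$, the conjecture is equivalent to $\widetilde{r}_\ell(N)^+ \to 0$, which is precisely the route the paper sketches at the end of Section~2. Your identification of what is missing also matches the paper's own accounting: the alternating sum $\sum_k (-1)^k \frac{2k+1}{\lambda_k}(\mathcal{W}^{i\ell k})^2$ and the diagonal sum $\sum_{k\ \mathrm{odd}} \frac{2k+1}{\lambda_k}(\mathcal{W}^{\ell\ell k})^2$ are exactly the two ``unknown terms'' the authors list after their bound $\hbar_N^2 r_\ell^+(N)\le 4H_\ell+2\ell+1$, which, like your non-alternating estimate via \eqref{intro_alt_lambda_small}, only yields $\widetilde{r}_\ell(N)^+ = O(1)$.

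The genuine gap is therefore the same one the paper leaves open, and your proposal does not close it: the ``fifth identity'' you posit is itself conjectural (and you give no candidate for $F(j,\ell,N)$, nor any argument that the resulting cancellation would be exact rather than merely reduce the constant), and the Ponzano--Regge/stationary-phase alternative is stated but not carried out, with the hardest parts (uniformity of the error in $k,k'$ and the diagonal cases where \eqref{intro_alt_lambda_small} fails) explicitly deferred. One remark worth adding: the paper suggests a complementary mechanism you do not mention, namely that the generic bound \eqref{eq_6j_upper_bound} is far from sharp precisely when $i+j+\ell$ is odd, because the Clebsch--Gordan coefficient in the even-parity asymptotic \eqref{eq_clebschgordan} vanishes in the odd case; a sharper pointwise (or summed) bound on odd-parity $6j$ symbols would give $\widetilde{r}_\ell(N)^+\to 0$ directly, without any cancellation between alternating and non-alternating pieces. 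In summary, your reduction and bookkeeping are correct and coincide with the paper's intended strategy, but what you have is a programme resting on further unproved identities or asymptotics, not a proof; the statement remains, as in the paper, a conjecture.
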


A possible strategy toward proving this conjecture begins with establishing the identities stated in Conjecture \ref{conj_new_formulas} through combinatorial methods. Once these identities are proved, the asymptotic behavior in Conjecture \ref{conjecture_ricci} would follow by deriving a  
sharper upper bound for $r_{\ell}(N)^+$ in order to prove that $\lim_{N\to\infty} \tilde{r}_{\ell}(N)^+=0$ for each $\ell$.

\section{Ricci curvature of the Zeitlin metric}\label{sec_ricci_finite_N}

In this section, we begin by reviewing the quantization framework underlying the Zeitlin model. Next, we derive a few lemmas that are useful for curvature computations and prove Theorem \ref{main_thm}.

\subsection{Quantization}

The incompressible Euler equations can be viewed, following Arnold’s framework, as geodesic equations on the infinite-dimensional Lie group $\mathrm{Diff}_\mu(S^2)$ of area-preserving diffeomorphisms of the sphere. The corresponding Lie algebra consists of divergence-free vector fields, which may be identified with Hamiltonian vector fields on the two-sphere. Passing to the vorticity formulation, one takes the curl of the first equation in \eqref{eq_euler}, yielding a scalar equation for the vorticity $\omega = \mathrm{curl}\, u$:
\begin{equation}\label{eq_euler_vorticity}
    \partial_t\omega + \{\psi, \omega\} = 0, \qquad \Delta \psi = \omega,
\end{equation}
where $\psi$ is the stream function, related to the velocity $u$ through the skew-gradient $u = \nabla^\perp\psi$, and $\{\cdot, \cdot\}$ denotes the Poisson bracket induced by the area form on the sphere. This formulation replaces the Lie algebra  $\mathrm{Diff}_{\mu, \mathrm{ex}}(S^2)$ of Hamiltonian vector fields with the Poisson algebra $\big(C^\infty_0(S^2), \{\cdot, \cdot\}\big)$ of smooth, mean-zero functions on $S^2$.

Quantization theory provides an approximation of this infinite-dimensional Poisson algebra by a sequence of matrix Lie algebras $\mathfrak{su}(N)$, each equipped with a distinguished right-invariant metric and scaled matrix commutators. Specifically, as $N \to \infty$, the algebras $\big(\mathfrak{su}(N), \frac{1}{\hbar_N}[\cdot,\cdot]\big)$, with $\hbar_N = 2/\sqrt{N^2 - 1}$ and the metric $g_N$ in \eqref{eq_zeitlin_metric}, converge weakly—via the $L^2$-adjoint of a quantization operator \( \mathsf{T}_N \colon C^\infty_0(S^2) \to \mathfrak{su}(N) \)—to the Poisson algebra \( (C^\infty_0(S^2), \{\cdot, \cdot\}) \) (see Charles and Polterovich~\cite{ChPo2018}). 
This approximation is the \emph{Zeitlin model} for the two-sphere. Zeitlin's discretization replaces the continuous vorticity equation \eqref{eq_euler_vorticity} with a finite-dimensional analogue:
\begin{equation}\label{eq:euler-zeitlin}
    \dot{W} + \frac{1}{\hbar}[P, W] = 0, \qquad \Delta_N P = W,
\end{equation}
with $P, W$ curves in the Lie algebra $\mathfrak{su}(N)$. The equations \eqref{eq:euler-zeitlin} retain the Lie–Poisson structure of the original system, and describe geodesics on \( SU(N) \) with respect to the right-invariant Riemannian metric \eqref{eq_zeitlin_metric}, derived from kinetic energy. 

In what follows, we collect a few lemmas that will be used throughout the paper, in particular in the proof of Theorem \ref{main_thm}.

\begin{lemma}\label{covderivprop}
The operator $\adstar$ defined by $\langle \adstar_uv, w\rangle = \langle v, \ad_uw\rangle$ for all $u,v,w\in \mathfrak{su}(N)$ is given by 
\begin{equation}\label{adstar}
\adstar_uv = -\Delta^{-1}\ad_u\Delta v,
\end{equation}
where $\Delta$ is the quantized Laplacian, defined relative to the quantization basis $\{T_{\ell m}\}$ by
\begin{equation}
\Delta T_{\ell m} = -\ell(\ell+1)T_{\ell m}.
\end{equation}
\end{lemma}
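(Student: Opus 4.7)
The plan is to start from the defining relation $\langle \adstar_u v, w\rangle = \langle v, \ad_u w\rangle$ for all $w\in\mathfrak{su}(N)$, rewrite both sides in terms of the bi-invariant trace pairing $(a,b) = \Tr(a^\dagger b)$, and then transport the Laplacian $\Delta$ across the bracket using two basic facts: self-adjointness of $\Delta$ with respect to $(\cdot,\cdot)$, and ad-invariance of $(\cdot,\cdot)$.

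First I would observe that the Zeitlin metric is simply $\langle u,v\rangle = \tfrac{1}{N}(u, -\Delta v)$, and that $\Delta$ is self-adjoint with respect to $(\cdot,\cdot)$ because $\{T_{\ell m}\}$ is orthonormal in the bi-invariant metric and diagonalizes $\Delta$ with real eigenvalues $-\ell(\ell+1)$. Hence $(a,\Delta b) = (\Delta a, b)$ for all $a,b\in\mathfrak{su}(N)$, and the same holds for $\Delta^{-1}$ on the orthogonal complement of its kernel.

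Next, I would expand the right-hand side:
\begin{equation*}
\langle v, \ad_u w\rangle = \tfrac{1}{N}\bigl(v,\,-\Delta(\ad_u w)\bigr)
= \tfrac{1}{\hbar_N N}\bigl(v, \,\Delta[u,w]\bigr)
= \tfrac{1}{\hbar_N N}\bigl(\Delta v,\,[u,w]\bigr),
\end{equation*}
using $\ad_u w = -\tfrac{1}{\hbar_N}[u,w]$ and self-adjointness of $\Delta$. I would then invoke the standard ad-invariance identity $(a,[u,b]) = -([u,a],b)$, which is immediate from the cyclicity of trace together with $u^\dagger = -u$. This converts the expression to
\begin{equation*}
\langle v, \ad_u w\rangle = -\tfrac{1}{\hbar_N N}\bigl([u,\Delta v],\,w\bigr).
\end{equation*}
Finally, to match the form $\langle X, w\rangle = \tfrac{1}{N}(X,-\Delta w) = \tfrac{1}{N}(-\Delta X, w)$, I would introduce $\Delta^{-1}$ via $[u,\Delta v] = \Delta\bigl(\Delta^{-1}[u,\Delta v]\bigr)$ and read off
\begin{equation*}
\adstar_u v = -\tfrac{1}{\hbar_N}\Delta^{-1}[u,\Delta v] = -\Delta^{-1}\ad_u\Delta v,
\end{equation*}
as claimed.

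The computation is essentially routine bookkeeping, so there is no serious obstacle; the only subtle point is keeping track of signs arising from the three sources $u^\dagger=-u$, the minus sign in $\ad_u = -\tfrac{1}{\hbar_N}[\cdot,\cdot]$, and the minus sign in the Zeitlin metric $\langle\cdot,\cdot\rangle = \tfrac{1}{N}(\cdot,-\Delta\cdot)$. A minor point to note is that $\Delta$ annihilates constants (the $\ell=0$ component), but this causes no issue because $\mathfrak{su}(N)$ corresponds to the mean-zero sector, so $\Delta$ is invertible on all of $\mathfrak{su}(N)$ and $\Delta^{-1}$ is well defined there.
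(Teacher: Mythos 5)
Your argument is correct and is essentially the paper's own proof, which is left as "a straightforward computation using bi-invariance of $(\cdot,\cdot)$ and $\langle u,v\rangle=-\tfrac1N(u,\Delta v)$"; the self-adjointness of $\Delta$, the ad-invariance identity $(a,[u,b])=-([u,a],b)$, and the invertibility of $\Delta$ on the traceless (mean-zero) sector are exactly the ingredients needed. One small slip: in your final display the intermediate expression should be $+\tfrac{1}{\hbar_N}\Delta^{-1}[u,\Delta v]$ rather than $-\tfrac{1}{\hbar_N}\Delta^{-1}[u,\Delta v]$ (since $\ad_u\Delta v=-\tfrac{1}{\hbar_N}[u,\Delta v]$, this is what equals $-\Delta^{-1}\ad_u\Delta v$, and it is the form the paper uses later in Lemma~\ref{lemma_kg}), so your stated conclusion $\adstar_u v=-\Delta^{-1}\ad_u\Delta v$ is unaffected.
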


\begin{proof}
A straightforward computation using bi-invariance of $(\cdot, \cdot)$ and  $\langle u, v\rangle = -\tfrac{1}{N}(u, \Delta v)$. 
\end{proof}
The weights $\alpha_{\ell}$ relating the two metrics \eqref{eq_zeitlin_metric} and \eqref{eq_biinvariant_metric} are 
\begin{equation}\label{definition_alpha}
\langle T_{\ell m}, T_{\ell m} \rangle = \alpha_{\ell}\big(T_{\ell m}, T_{\ell m}\big), \quad \text{where} \quad \alpha_{\ell} = \frac{\ell(\ell+1)}{N}.
\end{equation}
The first ingredient towards the proof of Theorem \ref{main_thm} is a formula for Ricci curvature in terms of  $\weight_{\ell}$. Our starting point is Arnold's sectional curvature formula for a left- or right-invariant metric on any Lie group, which can be written in the form 
\begin{equation}\label{arnoldsectional}
K(u,v) = \frac{\frac{1}{4} \| \adstar_uv + \adstar_vu + \ad_uv \|^2 -  \langle \adstar_uv + \ad_uv, \ad_uv\rangle - \langle \adstar_uu, \adstar_vv\rangle}{\|u\|^2 \|v\|^2-\langle u,v\rangle^2}.
\end{equation}

Using \eqref{arnoldsectional}, we can derive the following expression for Ricci curvature. A similar formula was given in \cite{krishnan2021diagonal} in terms of an orthonormal basis, so we omit its proof. 
\begin{lemma}\label{simplifiedricci}
For any vectors $u\in V_{\ell}$ and $v\in V_{\ell'}$,  
the Ricci curvature of \eqref{eq_zeitlin_metric} is given by 
\begin{equation}\label{riccisymmetrized}
\Ric(u,v) = -\sum_{k=1}^{N-1} \sum_{k'=1}^{N-1} \frac{\weight_{\ell}\weight_{\ell'} - (\weight_{k}-\weight_{k'})^2}{4\weight_{k}\weight_{k'}} \Tr
{P_{k}\ad_u P_{k'}\ad_v P_{k}}.
\end{equation}
\end{lemma}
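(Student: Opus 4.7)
The plan is to start from Arnold's formula~\eqref{arnoldsectional} and trace over a Zeitlin-orthonormal basis. Since $\Ric(u,u) = \sum_i \langle R(u,e_i)e_i, u\rangle = \sum_i N(u,e_i)$, where $N(u,e_i)$ denotes the numerator in~\eqref{arnoldsectional} (the denominator $D(u,e_i)$ cancels because $\langle R(u,e_i)e_i,u\rangle = K(u,e_i)D(u,e_i) = N(u,e_i)$), polarizing in $u$ gives a bilinear formula for $\Ric(u,v)$ as a sum over any orthonormal basis. The natural choice is the Zeitlin-orthonormal basis $\{T_{km}/\sqrt{\weight_k}: 1\le k\le N-1,\,-k\le m\le k\}$, which is orthonormal for $\langle\cdot,\cdot\rangle$ by~\eqref{definition_alpha}. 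Summing over the index $m$ for fixed $k$ realizes the orthogonal projection $P_k$ onto $V_k$, which is precisely why projections rather than individual basis vectors appear in~\eqref{riccisymmetrized}.

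The key algebraic ingredient is the formula $\adstar_u w = -\Delta^{-1}\ad_u\Delta w$ from Lemma~\ref{covderivprop}. Since $\Delta$ acts as the scalar $-\lambda_k = -N\weight_k$ on $V_k$, this decomposes for $w\in V_{k'}$ as
\begin{equation*}
P_k\adstar_u w \;=\; -\frac{\weight_{k'}}{\weight_k}\,P_k\ad_u w.
\end{equation*}
Using this identity, each of the three terms in $N(u,e_i)$ splits into a sum indexed by pairs $(k,k')$, with coefficients that are rational functions of the four numbers $\weight_\ell,\weight_{\ell'},\weight_k,\weight_{k'}$. Summing the resulting inner products $\langle\,\cdot\,,T_{km}/\sqrt{\weight_k}\rangle$ over the index $m$ converts each contribution into an operator trace involving the projections $P_k$ composed with $\ad_u$ and $\ad_v$, which assemble into copies of the trace $\Tr(P_k\ad_u P_{k'}\ad_v P_k)$.

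The principal obstacle is the algebraic bookkeeping needed to collapse the resulting coefficient. The squared-norm term $\tfrac{1}{4}\|\adstar_u e_i + \adstar_{e_i} u + \ad_u e_i\|^2$, once polarized in $u$ and expanded, produces nine cross terms, each carrying a different product of ratios such as $\weight_\ell/\weight_k$ or $\weight_{k'}/\weight_k$; the two remaining terms of $N(u,e_i)$ contribute further correction terms. One has to verify that after collecting like ratios, the combined coefficient of $\Tr(P_k\ad_u P_{k'}\ad_v P_k)$ simplifies precisely to $-(\weight_\ell\weight_{\ell'} - (\weight_k-\weight_{k'})^2)/(4\weight_k\weight_{k'})$, so that the seemingly asymmetric Arnold formula yields the manifestly $(k,k')$-symmetric expression~\eqref{riccisymmetrized}. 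Since an essentially equivalent formula in terms of an orthonormal basis and structure constants is already obtained in~\cite{krishnan2021diagonal}, the cleanest route is to cite that expression and translate it into the projection language by contracting structure constants against the eigen-projections $P_k$; a more self-contained alternative is to apply the Milnor-type right-invariant Ricci formula directly and carry out the eigenspace expansion a single time, which avoids polarization altogether.
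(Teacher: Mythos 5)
Your proposal follows essentially the same route the paper intends (and omits): Arnold's formula \eqref{arnoldsectional} traced over a Zeitlin-orthonormal basis, with Lemma~\ref{covderivprop} giving $P_k\adstar_u w = -(\weight_{k'}/\weight_k)P_k\ad_u w$ for $w\in V_{k'}$, the $m$-sums producing the projections, and the option of citing \cite{krishnan2021diagonal} exactly as the paper does. The plan is sound; the only point left implicit is that the raw coefficient from Arnold's numerator matches $-(\weight_\ell\weight_{\ell'}-(\weight_k-\weight_{k'})^2)/(4\weight_k\weight_{k'})$ only after symmetrizing in $(k,k')$ (the discrepancy is antisymmetric and cancels against the $(k,k')$-symmetry of the trace), which you correctly flag as the bookkeeping to verify.
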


\subsection{Wigner symbols}

Formula \eqref{riccisymmetrized} depends on computing the traces of certain operators. On $SU(N)$, these traces can be computed directly in terms of the Wigner symbols that give the structure constants relative to the special basis $\{T_{\ell m}\}$. Following Zeitlin’s construction (\cite{Ze1991}, \cite{Ze2004}), we now recall the explicit form of these structure constants.

The commutator between $T_{\ell m}$ and $T_{\ell' m'}$ in $\mathfrak{su}(N)$ is given by
\begin{equation}
[T_{\ell m}, T_{\ell' m'}]
=
\sum\limits_{\ell''=1}^{N-1}
~\sum\limits_{m'' = -\ell''}^{\ell''} C_{\ell m\ell' m'}^{\ell'' m''(N)} T_{\ell'' m''}
\end{equation}
with structure constants 
\begin{gather}\label{eq_coefficients}
\begin{split}
C_{\ell m\ell' m'}^{\ell'' m''(N)} &= S_{\ell,\ell',\ell''} \Big(1 - (-1)^{\ell + \ell' + \ell''}\Big)(-1)^{m''+1}
\left(\begin{matrix}
\ell & \ell' & \ell'' \\
m & m' & m''
\end{matrix}
\right)
\left\{\begin{matrix}
\ell & \ell' & \ell'' \\
\frac{N-1}{2} & \frac{N-1}{2} & \frac{N-1}{2}
\end{matrix}
\right\},
\end{split}
\end{gather}
where:
\begin{enumerate}
    \item $S_{\ell,\ell',\ell''} = \sqrt{2\ell+1}\sqrt{2\ell'+1}\sqrt{2\ell''+1}$, so this coefficient is never zero;
    \item $(:::)$ denotes the Wigner $3j$ symbol;
    \item $\{:::\}$ denotes the Wigner $6j$ symbol.
\end{enumerate}

The Lie algebra $\mathfrak{su}(N)$ is split into subspaces (only $V_1$ is a Lie subalgebra) according to
$$V_{\ell} := \mathrm{span} \,\{ T_{\ell m} \, : \, |m| \leq \ell \}, \qquad 1 \leq \ell \leq N-1.$$
The coefficients $C_{\ell m\ell' m'}^{\ell'' m''(N)}$ in \eqref{eq_coefficients} are zero in the following special cases:
\begin{enumerate}
    \item If $\ell + \ell' + \ell''$ is even; or
    \item If $\ell,\ell',\ell''$ cannot be the sides of a triangle; e.g., if $|\ell-\ell'| > \ell''$; 
    \item If $m + m' + m'' \neq 0$.
\end{enumerate}
The last two properties come from selection rules of the Wigner $3j$ symbols (\cite{Varshalovich1988quantum}). Other zeros are possible, since it is an open problem to parametrize all zeros of Wigner $6$j-symbols.  We now have the necessary tools to prove Theorem \ref{main_thm}.

\begin{proof}[Proof of Theorem~\ref{main_thm}]
The key problem is to evaluate the trace term in formula \eqref{riccisymmetrized}. To do this, fix $k, k'$ and take the bases $\{T_{k m}\}_{|m| \leq k}$ of $V_{k}$ and $\{T_{k'm'}\}_{|m'| \leq k'}$ of $V_{k'}$, orthonormal in the bi-invariant metric \eqref{eq_biinvariant_metric}. Then, for any $u$ and $v$ we have
\begin{align}
\Tr{P_{k} \ad_{u} P_{k'}\ad_{v} P_{k}} 
&= \sum_{m=-k}^{k} \big( T_{k m}, \ad_{u} P_{k'} \ad_{v} T_{k m}\big) = -\sum_{m=-k}^{k} \big( \ad_{u} T_{k m}, P_{k'} \ad_{v} T_{k m} \big) \nonumber \\
&= -\sum_{m=-k}^{k} \sum_{m'=-k'}^{k'} \big( \ad_{u} T_{k m}, T_{k' m'}\big) \big( \ad_{v} T_{k m}, T_{k' m'}\big). \label{eq_big_trace}
\end{align}
Next, we specialize to basis vectors $u = T_{\ell r}$ and $v=T_{\ell' r'}$, orthonormal in the bi-invariant metric \eqref{eq_biinvariant_metric}, with $\lvert r\rvert \le \ell$ and $\lvert r'\rvert \le \ell'$. 

\begin{align*}
&-\hbar_N^2\Tr{P_{k}\ad_{u} P_{k'}\ad_{v} P_{k}} = \hbar_N^2\sum_{m=-k}^{k} \sum_{m'=-k'}^{k'} \big( \ad_{T_{\ell r}} T_{k m}, T_{k'm'}\big) \big( \ad_{T_{\ell' r'}} T_{k m}, T_{k'm'} \big) 
\\
&\qquad= \sum_{m=-k}^{k} \sum_{m'=-k'}^{k'} C_{\ell r k m}^{k' m'} C_{\ell' r' k m}^{k' m'}
\\
&\qquad= \sum_{m=-k}^{k} \sum_{m'=-k'}^{k'} S_{\ell,k,k'} \Big(1 - (-1)^{\ell+k+k'}\Big)(-1)^{m'+1}
\left(\begin{matrix}
\ell & k & k' \\
r & m & m'
\end{matrix}
\right)
\left\{\begin{matrix}
\ell & k & k' \\
\frac{N-1}{2} & \frac{N-1}{2} & \frac{N-1}{2}
\end{matrix}
\right\} \times
\\
&\qquad\qquad\qquad
S_{\ell',k,k'} \Big(1 - (-1)^{\ell'+k+k'}\Big)(-1)^{m'+1}
\left(\begin{matrix}
\ell' & k & k' \\
r' & m & m'
\end{matrix}
\right)
\left\{\begin{matrix}
\ell' & k & k' \\
\frac{N-1}{2} & \frac{N-1}{2} & \frac{N-1}{2}
\end{matrix}
\right\} \\ 
&\qquad= S_{\ell,k,k'}S_{\ell',k,k'} \Big(1 - (-1)^{\ell+k+k'}\Big) \Big(1 - (-1)^{\ell'+k+k'}\Big)  \times
\\
&\qquad\qquad\qquad \left\{\begin{matrix}
\ell & k & k' \\
\frac{N-1}{2} & \frac{N-1}{2} & \frac{N-1}{2}
\end{matrix}
\right\}  \left\{\begin{matrix}
\ell' & k & k' \\
\frac{N-1}{2} & \frac{N-1}{2} & \frac{N-1}{2}
\end{matrix}
\right\} 
\sum_{m=-k}^{k} \sum_{m'=-k'}^{k'} 
\left(\begin{matrix}
\ell & k & k' \\
r & m & m'
\end{matrix}
\right)
\left(\begin{matrix}
\ell' & k & k' \\
r' & m & m'
\end{matrix}
\right).
\end{align*}
Here we note that most of the terms in the structure constant $C^{k' m'}_{\ell r k m}$ do not depend on $m$ or $m'$, so they can be pulled out of the summation, and we are left with a standard orthogonality result for the $3j$-symbols (see \cite{Varshalovich1988quantum}):
$$\sum_{m=-k}^{k} \sum_{m'=-k'}^{k'} 
\left(\begin{matrix}
\ell & k & k' \\
r & m & m'
\end{matrix}
\right)
\left(\begin{matrix}
\ell' & k & k' \\
r' & m & m'
\end{matrix}
\right) = \frac{\delta_{\ell\ell'} \delta_{rr'}}{2\ell+1} \qquad \text{if $\lvert k-k'\rvert \le \ell\le k+k'$}.
$$
Plugging this result in, we get 
\begin{equation}\label{trace_form}
-\hbar_N^2 \Tr{P_{k}\ad_{T_{\ell r}} P_{k'}\ad_{T_{\ell' r'}} P_{k}} = \delta_{\ell\ell'} \delta_{rr'} \,
\frac{S_{\ell,k,k'}^2}{2\ell+1} \Big(1 - (-1)^{\ell+k+k'}\Big)^2
\left\{\begin{matrix}
\ell & k & k' \\
\frac{N-1}{2} & \frac{N-1}{2} & \frac{N-1}{2}
\end{matrix}
\right\}^2,
\end{equation}
which simplifies further to the statement 
\begin{equation*}
-\Tr{P_{k}\ad_{T_{\ell r}} P_{k'}\ad_{T_{\ell r}} P_{k}} = 
\frac{4 (2k+1)(2k'+1)}{ \hbar_N^2 } \left\{\begin{matrix}
\ell & k & k' \\
\frac{N-1}{2} & \frac{N-1}{2} & \frac{N-1}{2}
\end{matrix}
\right\}^2
\end{equation*}
if $\ell+k+k'$ is odd, for any $r$ with $\lvert r\rvert \le \ell$.  

Since the right-hand side of \eqref{trace_form} vanishes whenever the indices in the top row of the $6j$ symbol are even, we get from \eqref{riccisymmetrized} the formula
\begin{gather*}
\begin{split}
\Ric(T_{\ell r},T_{\ell' r'}) &= \frac{\delta_{\ell\ell'}\delta_{rr'}}{\hbar_N^2} \sum\limits_{\substack{k, k' = 1 \\ \ell+k+k'\;\mathrm{odd}}}^n \frac{\weight_{\ell}\weight_{\ell'} - (\weight_{k}-\weight_{k'})^2}{\weight_{k}\weight_{k'}} 
\, (2k+1)(2k'+1)  \left\{\begin{matrix}
\ell & k & k' \\
\frac{N-1}{2} & \frac{N-1}{2} & \frac{N-1}{2}
\end{matrix}
\right\}^2
\end{split}
\end{gather*}

Now for any vector $u\in V_{\ell}$, we can write $u=\sum_{r=-\ell}^{\ell} u_r T_{\ell r}$ in terms of the basis $T_{\ell r}$ which is orthonormal in the bi-invariant metric. Thus the weighted metric will have $\langle u,u\rangle = \alpha_{\ell} \big( u,u\big)=\alpha_{\ell}\sum_{r=-\ell}^{\ell} u_r^2$, while the Ricci curvature will be 
\begin{align*}
\Ric(u,u) &= \sum_{r=-\ell}^{\ell} \sum_{r'=-\ell}^{\ell} u_ru_{r'}\Ric(T_{\ell r},T_{\ell r'}) \\
&= \sum_{r=-\ell}^{\ell} \frac{u_r^2}{\hbar_N^2} \sum\limits_{\substack{k, k' = 1 \\ \ell+k+k'\;\mathrm{odd}}}^n \frac{\weight_{\ell}^2 - (\weight_{k}-\weight_{k'})^2}{\weight_{k}\weight_{k'}} 
\, (2k+1)(2k'+1)  \left\{\begin{matrix}
\ell & k & k' \\
\frac{N-1}{2} & \frac{N-1}{2} & \frac{N-1}{2}
\end{matrix}
\right\}^2 \\
&= r_{\ell}(N) \,\langle u,u\rangle
\end{align*}
in each subspace $V_{\ell}$, 
where 
\begin{equation*}
    r_{\ell}(N) := \frac{1}{\hbar_N^2} \sum\limits_{\substack{k, k' = 1 \\ \ell+k+k'\;\mathrm{odd}}}^n \frac{\weight_{\ell}^2 - (\weight_{k}-\weight_{k'})^2}{\weight_{k}\weight_{k'}\weight_{\ell}} 
\, (2k+1)(2k'+1)  \left\{\begin{matrix}
\ell & k & k' \\
\frac{N-1}{2} & \frac{N-1}{2} & \frac{N-1}{2}
\end{matrix}
\right\}^2,
\end{equation*} 
as desired. Note that the extra factor of $\alpha_{\ell}$ in the denominator is a consequence of expressing the Ricci curvature as a multiple of the weighted metric, not the bi-invariant metric.

Recalling that $\alpha_{\ell} = \lambda_{\ell}/N$ for each $\ell$ in the Zeitlin metric \eqref{eq_zeitlin_metric}, we can express this in terms of the original parameters $\lambda_{\ell}=\ell(\ell+1)$ via 
\begin{equation}\label{properscaledric}
r_{\ell}(N) := \frac{N}{\hbar_N^2} \sum\limits_{\substack{k, k' = 1 \\ \ell+k+k'\;\mathrm{odd}}}^n \frac{\lambda_{\ell}^2 - (\lambda_{k}-\lambda_{k'})^2}{\lambda_{k}\lambda_{k'}\lambda_{\ell}} 
\, (2k+1)(2k'+1)  \left\{\begin{matrix}
\ell & k & k' \\
\frac{N-1}{2} & \frac{N-1}{2} & \frac{N-1}{2}
\end{matrix}
\right\}^2.
\end{equation}
Finally, note that from \eqref{properscaledric} we can split the Ricci curvature into a positive and a negative term
$r_{\ell}(N) = r_{\ell}(N)^+-r_{\ell}(N)^-$ where $r_{\ell}(N)^+$ and $r_{\ell}(N)^-$ are given in \eqref{eq_ricci_intro}. This concludes the proof of Theorem \ref{main_thm}.
\end{proof}

\section{Asymptotic analysis and conjectured formulas}\label{section_asymptotic}

We now turn to the asymptotic regime, with the goal of understanding the limiting behavior of the Ricci eigenvalues $r_\ell(N)$ as $N \rightarrow \infty$. As in \eqref{eq_sixj_abbrev}, 
we write:
\begin{equation*}
\mathcal{W}^{ij\ell} = \sixJ{i}{j}{\ell}, \qquad \mathcal{W}_j^i = \left\{\begin{matrix}
i & \frac{N-1}{2} & \frac{N-1}{2} \\ 
j & \frac{N-1}{2} & \frac{N-1}{2}
\end{matrix}
\right\}
\end{equation*}
for these Wigner $6j$ symbols. Formula \eqref{eq_sum_6j} immediately gives the upper bound
\begin{equation}\label{eq_6j_upper_bound}
\big(\mathcal{W}^{ij\ell}\big)^2 \leq \frac{1}{N(2\ell+1)}.
\end{equation}

\begin{remark}
In the case where $a+b+c$ is \emph{even}, the asymptotic formula (see \cite{brussaard1957classical})
\begin{equation}\label{eq_clebschgordan}
\big(\mathcal{W}^{ij\ell}\big)^2 \simeq \frac{1}{2(N-1)(2\ell+1)}\big(C^{\ell 0}_{i0j0}\big)^2
\end{equation}
where $C^{\ell 0}_{i0j0}$ is the Clebsch-Gordan coefficient, shows that as far as $N$ is concerned, the upper bound \eqref{eq_6j_upper_bound} is essentially sharp. On the other hand, if $i+j+\ell$ is \emph{odd}, then the coefficients $C^{\ell 0}_{i0j0}$ vanish and the proof of \eqref{eq_clebschgordan} given in \cite{brussaard1957classical} is no longer valid. Indeed, numerically it seems that a sharper bound than  \eqref{eq_6j_upper_bound} is possible in the case of  $i+j+\ell$ odd, which would help with Conjecture \ref{conjecture_ricci}, but this requires exploiting this parity assumption somehow.  
\end{remark}

Concerning the sign of Ricci curvature, we first prove Corollary \ref{positive_v1_corollary}, which states that Ricci curvature is always positive in the $V_1$ subspace.

\begin{proof}[Proof of Corollary \ref{positive_v1_corollary}]
We first claim that the negative part of Ricci in the $V_1$ subspace, namely $r_1^-(N)$, is zero for every $N$. From Theorem \ref{main_thm}, we have
\begin{equation}\label{eq_v1_negative_ricci}
r_{1}(N)^- = \frac{N}{\hbar_N^2}\sum\limits_{\substack{i, j = 1 \\ i+1+j\;\mathrm{odd}}}^{N-1}
\frac{
(\lambda_i-\lambda_j)^2
(2i+1)(2j+1)
}
{\lambda_i\lambda_j\lambda_{1}}\left(\mathcal{W}^{ij1}\right)^2.
\end{equation}
Due to selection rules, the upper row $i, j, 1$ of the $6j$ symbol $\mathcal{W}^{ij1}$ must satisfy $|i-j| \leq 1$. At the same time, our Ricci formula forces $i+j+1$ to be odd, so that $i+j$ is even, which means that only $i = j$ is allowed in the sum for $r_1^-(N)$. Thus, \eqref{eq_v1_negative_ricci} vanishes due to the $(\lambda_i-\lambda_j)^2$ term.

On the other hand, the positive part
\begin{equation}\label{eq_v1_positive_ricci}
r_{1}(N)^+ = \frac{N}{\hbar_N^2}\sum\limits_{\substack{i, j = 1 \\ i+1+j\;\mathrm{odd}}}^{N-1}
\frac{
\lambda_{1}
(2i+1)(2j+1)
}
{\lambda_i\lambda_{j}}\left(\mathcal{W}^{ij1}\right)^2
\end{equation}
never vanishes, because in the special case $i = j = 1$, we have (cf. \cite{Varshalovich1988quantum})
\begin{equation}
\left(\mathcal{W}^{ij1}\right)^2 = \frac{2}{3N(N^2-1)},
\end{equation}
ensuring that the sum \eqref{eq_v1_positive_ricci} is strictly positive.
\end{proof}

\noindent In the remainder of this section, we make use of the identities listed in Conjecture \ref{conj_new_formulas}, and derive the necessary estimates to prove Theorem \ref{theorem_3}. The proof is broken down into two parts: an upper bound for $r_{\ell}(N)^+$ and an exact formula for $r_{\ell}(N)^-$. 

\subsection{Upper bound for $r_{\ell}(N)^+$}

We start with \eqref{eq_ricci_intro}, estimating
\begin{equation}
\frac{\hbar_N^2}{N\lambda_\ell} r_\ell^+(N)
=
\sum\limits_{\substack{i, k = 1 \\ i+k+\ell\;\mathrm{odd}}}^{N-1}
\frac{
(2i+1)(2k+1)
}
{\lambda_i\lambda_{k}}(\mathcal{W}^{i\ell k})^2
\le 
\sum_{i, k = 1}^{N-1}
\frac{
(2i+1)(2k+1)
}
{\lambda_i\lambda_{k}}(\mathcal{W}^{i\ell k})^2 
\end{equation}

Therefore, by \eqref{intro_alt_lambda_small} we get
\begin{equation}\label{rplusupper}
\hbar_N^2 r_\ell^+(N)
\le 
\sum\limits_{\substack{i = 1 \\ i\neq \ell}}^{N-1}
\frac{\lambda_{\ell} (2i+1)}{\lambda_i|i-\ell|(i+\ell+1)} 
+
N(2\ell+1)
\sum\limits_{\substack{k = 1 \\ k \;\mathrm{odd}}}^{N-1}
\frac{(2k+1)}{\lambda_{k}}\big(\mathcal{W}^{\ell\ell k}\big)^2
\end{equation}

The partial fraction decomposition of the first sum in \eqref{rplusupper} is 
\begin{align*}
\sum\limits_{\substack{i = 1 \\ i\neq \ell}}^{N-1}
\frac{\lambda_{\ell} (2i+1)}{\lambda_i|i-\ell|(i+\ell+1)}  &= -\sum_{i=1}^{\ell-1} \left( \frac{1}{i+\ell+1} + \frac{1}{i-\ell} - \frac{1}{i} - \frac{1}{i+1}\right)  \\
&\qquad\qquad
    + \sum_{i=\ell+1}^{N-1} \left( \frac{1}{i+\ell+1} + \frac{1}{i-\ell} - \frac{1}{i} - \frac{1}{i+1}\right) \\
    &= 2(H_{\ell-1}+H_{\ell}+H_{\ell+1}) - 1 -( H_{2\ell}+H_{2\ell+1}) \\
    &\qquad\qquad -(H_N + H_{N-1}) + H_{N+\ell} + H_{N-\ell-1} \\
    &\le 4H_{\ell}
\end{align*}
in terms of the harmonic numbers $H_{\ell} = \sum_{k=1}^{\ell} k^{-1}$. Meanwhile 
the second sum is estimated roughly using \eqref{eq_sum_6j}. Combining the two estimates gives 
$$ \hbar_N^2 r_\ell^+(N) \le 4H_{\ell}+2\ell+1.$$
We expect this is far from sharp, since numerically it approaches zero faster than this as $N\to\infty$; see Figure \ref{fig:r_ell_1_23_N}. To proceed further we would need identities for the unknown terms 
$$ \sum_{k=1}^{N-1} (-1)^k \, \frac{2k+1}{\lambda_k} \, (\mathcal{W}^{i\ell k})^2 \qquad \text{and}\qquad \sum\limits_{\substack{k = 1 \\ k \;\mathrm{odd}}}^{N-1}
\frac{(2k+1)}{\lambda_{k}}\big(\mathcal{W}^{\ell\ell k}\big)^2.
$$ 

\color{black}

\subsection{Formula for $r_{\ell}^-(N)$}

Starting from the definition of $r_{\ell}^-(N)$ in \eqref{eq_ricci_intro}, 
\begin{gather}
\begin{split}
\frac{\hbar_N^2}{N} \lambda_\ell r_\ell^{-}(N)
&=
\sum\limits_{\substack{i, k = 1 \\ i+\ell+k\;\mathrm{odd}}}^{N-1}
\frac{
\big(\lambda_i - \lambda_{k}\big)^2
(2i+1)(2k+1)
}
{\lambda_i\lambda_{k}}
(\mathcal{W}^{i\ell k})^2,
\end{split}
\end{gather}
we expand the $(\lambda_i-\lambda_{k})^2$ term and split the above sum as
\begin{gather}
\begin{split}
\sum\limits_{\substack{i, k = 1 \\ i+\ell+k\;\mathrm{odd}}}^{N-1}
\frac{
\big(\lambda_i - \lambda_{k}\big)^2
(2i+1)(2k+1)
}
{\lambda_i\lambda_{k}}(\mathcal{W}^{i\ell k})^2 = 2A - 2B
\end{split}
\end{gather}
where
\begin{gather}\label{eq_ABC}
\begin{split}
A &:= \sum\limits_{\substack{i, k = 1 \\ i+\ell+k\;\mathrm{odd}}}^{N-1}
\frac{
\lambda_{k}(2i+1)(2k+1)
}
{\lambda_i}(\mathcal{W}^{i\ell k})^2 
= \sum\limits_{\substack{i, k = 1 \\ i+\ell+k\;\mathrm{odd}}}^{N-1}
\frac{
\lambda_i(2i+1)(2k+1)
}
{\lambda_{k}}(\mathcal{W}^{i\ell k})^2, \\ 
B &:=
\sum\limits_{\substack{i, k = 1 \\ i+\ell+k\;\mathrm{odd}}}^{N-1}
(2i+1)(2k+1)(\mathcal{W}^{i\ell k})^2.
\end{split}
\end{gather}

We use the fact that $(1-(-1)^{i+\ell+k})$ vanishes if and only if $i+\ell+k$ is odd to remove the oddness constraint in the sum \eqref{eq_ABC}, and we get
\begin{gather*}
    \begin{split}
        2A&= \sum_{i,k=1}^{N-1} \frac{(1-(-1)^{i+\ell+k}) \lambda_k(2i+1)(2k+1)}{\lambda_i} (W^{i\ell k})^2 \\
        &= \sum_{i=1}^{N-1} \frac{2i+1}{\lambda_i} \sum_{k=1}^{N-1} \lambda_k(2k+1) (W^{i\ell k})^2 + \sum_{i=1}^{N-1} \frac{(-1)^{i+\ell+1}(2i+1)}{\lambda_i} \sum_{k=1}^{N-1} (-1)^k \lambda_k(2k+1) (W^{i\ell k})^2 \\
        &= \sum_{i=1}^{N-1} \frac{2i+1}{\lambda_i} \, \frac{(N^2-1)(\lambda_i+\lambda_{\ell}) - 2\lambda_i\lambda_{\ell}}{N(N^2-1)}  + \sum_{i=1}^{N-1} \frac{(-1)^{i+\ell+N}(2i+1) (\lambda_i+\lambda_{\ell})}{\lambda_i} \, W^{\ell}_i,
        \end{split}
        \end{gather*}
using formulas \eqref{intro_alt_lambda} and \eqref{intro_non_alt_lambda} from Conjecture \ref{conj_new_formulas}.
Now we split each term of the sum into those that involve $\lambda_i$ in the numerator and those that do not.
        \begin{gather}\label{Aderivation}
            \begin{split}
     2A   &= \sum_{i=1}^{N-1} (2i+1) \, \frac{(N^2-1-2\lambda_{\ell})}{N(N^2-1)}  +
        \sum_{i=1}^{N-1} \frac{2i+1}{\lambda_i} \, \frac{\lambda_{\ell}}{N} \\
        &\qquad\qquad+
        \sum_{i=1}^{N-1} (-1)^{i+\ell+N}(2i+1) \, W^{\ell}_i
        +
        \sum_{i=1}^{N-1} \frac{(-1)^{i+\ell+N}(2i+1) \lambda_{\ell}}{\lambda_i} \, W^{\ell}_i\\
        &= \frac{N^2-1-2\lambda_{\ell}}{N} + \lambda_{\ell} \sum_{i=1}^{N-1} \frac{2i+1}{\lambda_i} \left( \frac{1}{N} + (-1)^{i+\ell+N} W^{\ell}_i\right) \\
        &\qquad\qquad + (-1)^{\ell+N} \sum_{i=1}^{N-1} (-1)^i (2i+1) W^{\ell}_i \\
        &= \frac{N^2-1-2\lambda_{\ell}}{N} + \frac{2\lambda_{\ell} H_{\ell}}{N} + (-1)^{\ell+N+1} W^{\ell}_{0}
    \end{split}
\end{gather}
using \eqref{intro_eq_harmonic} from Conjecture 
\ref{conj_new_formulas} along with the identity  \eqref{eq_sum_alternating} 
in the form 
$$ \sum_{i=1}^{N-1} (-1)^i (2i+1) W^{\ell}_{i} = -W^{\ell}_0.$$

Now to evaluate $B$, notice that we do not need any of the conjectured identities, only the known ones. We write 
\begin{align*}
    2B &= \sum_{i,k=1}^{N-1} (1-(-1)^{i+\ell+k}) (2i+1)(2k+1) (\mathcal{W}^{i\ell k})^2 \\
    &= \sum_{i=1}^{N-1} (2i+1) \sum_{k=1}^{N-1} (2k+1) (\mathcal{W}^{i\ell k})^2 + \sum_{i=1}^{N-1} (-1)^{i+\ell+1} (2i+1)  \sum_{k=1}^{N-1} (-1)^k (2k+1) (\mathcal{W}^{i\ell k})^2 \\
    &= \sum_{i=1}^{N-1} (2i+1) \left( \frac{1}{N} - (\mathcal{W}^{i\ell 0})^2\right) + \sum_{i=1}^{N-1} (-1)^{i+\ell+1} (2i+1) \left( (-1)^{N+1}\mathcal{W}_i^{\ell} - (\mathcal{W}^{i\ell 0})^2\right),
\end{align*}
using formulas \eqref{eq_sum_6j}--\eqref{eq_sum_6j_2}, correcting for the fact that those sums start at $k=0$ rather than $k=1$.

Using the identities \eqref{eq_sum_6j}--\eqref{eq_sum_alternating}, this becomes 
\begin{gather}\label{Bfinal}
\begin{split}
    2B&= \frac{1}{N} \sum_{i=1}^{N-1} (2i+1)   + (-1)^{\ell+N} \sum_{i=1}^{N-1} (-1)^i (2i+1) \mathcal{W}_i^{\ell} \\
    &\qquad\qquad - \sum_{i=1}^{N-1} (2i+1) (\mathcal{W}^{i\ell 0})^2
    + \sum_{i=1}^{N-1} (-1)^{i+\ell} (2i+1) (\mathcal{W}^{i\ell 0})^2 \\
    &= \frac{N^2-1}{N}  + (-1)^{\ell+N} (-\mathcal{W}^{\ell}_0) -
    \sum\limits_{\substack{i = 1 \\ i+\ell \;\mathrm{odd}}}^{N-1} (2i+1) (\mathcal{W}^{i\ell 0})^2\\
    &= \frac{N^2-1}{N} + (-1)^{\ell+N+1} \mathcal{W}^{\ell}_0.
\end{split}
\end{gather}
Here the last sum vanishes since $\mathcal{W}^{i\ell 0}$ cannot satisfy the triangular relation if $(i+\ell)$ is odd.  
Subtracting \eqref{Bfinal} from \eqref{Aderivation}, we obtain 
\begin{equation}\label{final_rminus}
    \hbar_N^2 \lambda_{\ell} r_{\ell}^-(N) = 2A-2B = 2\lambda_{\ell}(H_{\ell}-1).
\end{equation} 

\noindent\textbf{Final formula and estimate on $r_\ell(N)$}. Putting everything together, we get 
\begin{gather}\label{eq_final_estimate}
\begin{split}
r_\ell(N) &= r_\ell^+(N) - r_\ell^-(N) \\
&\leq \frac{1}{\hbar_N^2}  
(4H_{\ell}+2\ell+1-2H_{\ell}+2) \\
&= \frac{(N^2-1)2H_{\ell}+2\ell+3}{4}.
\end{split}
\end{gather}
The lower bound 
$$ r_{\ell}(N) \ge -\frac{(N^2-1)(H_{\ell}-1)}{2}$$
can also be obtained by simply ignoring $r_{\ell}^+(N)$.  
This concludes the proof of Theorem \ref{theorem_3}.

\section{The homogeneous space $SU(N)/SO(3)$}
\label{homogeneous_section}

This section is devoted to the full computation of Ricci curvature for the homogeneous space $SU(N)/SO(3)$, leading to the proof of Theorem \ref{homogeneous_space_thm}. As explained in the introduction, this quotient space is natural from the point of view large scale, long-time behaviour of hydrodynamics on the sphere, serving as a model for $\mathrm{Diff}_{\mu}(S^2)/SO(3)$. 

Let $\mathfrak{h} = V_1$. This is a Lie subalgebra of $\mathfrak{su}(N)$, corresponding to the subgroup ${SO(3) \subseteq SU(N)}$. In the Zeitlin metric, its orthogonal complement is
\begin{equation}
\mathfrak{h}^{\perp} = V_2 \oplus \cdots \oplus V_{N-1}.
\end{equation}
The projection $\pi : SU(N) \rightarrow SU(N)/SO(3)$ is a Riemannian submersion, and by O'Neill's formulas, the sectional curvatures $K_G$ of the total space $G = SU(N)$ and $K_B$ of the base space $B = SU(N)/SO(3)$ are related by (see \cite{oneill1966thefundamental})
\begin{equation}\label{eq_submersion_curvature}
K_B(X, Y) = K_G(X, Y) + 3\frac{\|A_XY\|^2}{\|X \wedge Y\|^2}
\end{equation}
provided that $X, Y$ are horizontal vectors, linearly independent so that the normalizing factor $${\|X \wedge Y\|^2 := \|X\|^2\|Y\|^2 - \langle X, Y \rangle^2}$$
is not zero. The tensor $A$ is given by
\begin{equation}
A_XY = \frac{1}{2}\mathcal{V}\big(\mathcal{L}_XY\big)
\end{equation}
where $\mathcal{L}_XY$ is the Lie derivative of $Y$ in the direction of $X$, and  $\mathcal{V}$ is the projection onto the vertical subspace $V_1$. We begin by noting the following simplification, which will turn out to be very useful. 
\begin{lemma}\label{lemma_3_sum}
Let $C_{\ell,m,\ell',m'}^{\ell'',m''}$ denote the structure constants as given in \eqref{eq_coefficients}. Then, we have
\begin{equation}
\Big(C_{\ell,m,\ell,1-m}^{1,-1}\Big)^2 + 
\Big(C_{\ell,m,\ell,-m}^{1,0}\Big)^2 + 
\Big(C_{\ell,m,\ell,-m-1}^{1,1}\Big)^2 = \frac{12\ell(\ell+1)}{N(N^2-1)}
\end{equation}    
for all $1 \leq \ell \leq N-1$ and $|m| \leq \ell$. Note that the right-hand side does not depend on $m$.
\end{lemma}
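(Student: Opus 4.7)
The approach is direct substitution of the structure constant formula \eqref{eq_coefficients}, exploiting that each of the three constants appearing in the lemma shares the same first row $(\ell,\ell,1)$ in both the 3j and 6j symbols. After squaring, the common factor $S_{\ell,\ell,1}^2(1-(-1)^{2\ell+1})^2(\mathcal{W}^{\ell\ell 1})^2 = 12(2\ell+1)^2(\mathcal{W}^{\ell\ell 1})^2$ pulls out of the sum, leaving only the three squared 3j symbols to evaluate. The lemma reduces to showing
\[
12(2\ell+1)^2(\mathcal{W}^{\ell\ell 1})^2\sum_{m''=-1}^{1}\!\!\left(\begin{matrix}\ell & \ell & 1 \\ m & -m-m'' & m''\end{matrix}\right)^2 = \frac{12\ell(\ell+1)}{N(N^2-1)}.
\]

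Next I would evaluate the sum of three squared 3j symbols using the elementary closed forms available when one entry equals $1$. The three values are $\tfrac{(\ell+m)(\ell-m+1)}{2\ell(\ell+1)(2\ell+1)}$, $\tfrac{m^2}{\ell(\ell+1)(2\ell+1)}$, and $\tfrac{(\ell-m)(\ell+m+1)}{2\ell(\ell+1)(2\ell+1)}$; the cross-term cancellations make the numerator collapse to $2\ell(\ell+1)$, yielding $\tfrac{1}{2\ell+1}$ independent of $m$. This $m$-independence is also an instance of Schur's lemma, since the adjoint $V_1$-action on $V_\ell$ is irreducible. It remains to evaluate the common prefactor, i.e., to verify
\[
(2\ell+1)(\mathcal{W}^{\ell\ell 1})^2 = \frac{\ell(\ell+1)}{N(N^2-1)}, \qquad \text{equivalently,} \qquad \left\{\begin{matrix}\ell & \ell & 1 \\ \tfrac{N-1}{2} & \tfrac{N-1}{2} & \tfrac{N-1}{2}\end{matrix}\right\}^2 = \frac{\ell(\ell+1)}{(2\ell+1)\,N(N^2-1)}.
\]

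The main obstacle is justifying this specific 6j value. It is a standard entry for 6j symbols with one argument equal to $1$ and constant bottom row (see Varshalovich~\cite{Varshalovich1988quantum}), obtainable from the Racah sum. As a sanity check, setting $\ell=1$ recovers precisely the value $(\mathcal{W}^{111})^2 = \tfrac{2}{3N(N^2-1)}$ invoked in the proof of Corollary \ref{positive_v1_corollary}. An alternative route that bypasses the explicit 6j formula uses bi-invariance of the trace form, $(T_{1m''},[T_{\ell m},T_{\ell m'}]) = ([T_{1m''},T_{\ell m}],T_{\ell m'})$, together with the selection rule forcing $[T_{1m''},T_{\ell m}] = C\,T_{\ell,-m-m''} \in V_\ell$, to rewrite the lemma's left-hand side as $\sum_{m''}\|[T_{1m''},T_{\ell m}]\|^2 = (T_{\ell m},-\sum_{m''}\ad_{T_{1m''}}^2 T_{\ell m})$. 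This operator is the Casimir for the $V_1$-action on $V_\ell$ and acts as a scalar $c_\ell$ by Schur's lemma; the universal ratio $c_\ell/c_1 = \ell(\ell+1)/2$ for spin-$\ell$ Casimirs of $\mathfrak{so}(3)$, combined with the known $\ell=1$ value, then yields $c_\ell = \tfrac{12\ell(\ell+1)}{N(N^2-1)}$, avoiding the need for a closed form at general $\ell$.
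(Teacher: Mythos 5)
Your main argument is essentially the paper's proof: substitute \eqref{eq_coefficients}, pull out the common factor $12(2\ell+1)^2$, and evaluate using the standard closed forms (Varshalovich) for the three squared $3j$ symbols and for the $6j$ symbol with top row $(\ell,\ell,1)$ and bottom row $\bigl(\tfrac{N-1}{2},\tfrac{N-1}{2},\tfrac{N-1}{2}\bigr)$, after which the $m$-dependence cancels exactly as in the paper. Your alternative Casimir/Schur argument at the end is also a valid route that would bypass the explicit $6j$ evaluation, but the primary computation already coincides with the paper's proof.
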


\begin{proof}
Starting from \eqref{eq_coefficients}, first note that since $\ell''=1$ and $\ell'=\ell$, 
\begin{equation}
\Big(\sqrt{2\ell+1}\sqrt{2\ell'+1}\sqrt{2\ell''+1}\big(1-(-1)^{\ell+\ell+1}\big)\Big)^2 = 12(2\ell+1)^2.
\end{equation}
Next, we turn to the product of Wigner symbols
\begin{gather*}
\begin{split}
D_{\ell m\ell m'}^{1 m''} := 
\left(\begin{matrix}
\ell & \ell & 1 \\
m & m' & m''
\end{matrix}
\right)
\left\{\begin{matrix}
\ell & \ell & 1 \\
\frac{N-1}{2} & \frac{N-1}{2} & \frac{N-1}{2}
\end{matrix}
\right\},
\end{split}
\end{gather*}
both of which admit simple closed form expressions due to repeated indices and the presence of the $1$ in the upper row. They are given by (see \cite{Varshalovich1988quantum}, in particular section $9.5.4$ for the $6j$ symbol):
\begin{align*}
\left\{\begin{matrix}
\ell & \ell & 1 \\
\frac{N-1}{2} & \frac{N-1}{2} & \frac{N-1}{2}
\end{matrix}\right\}^2
&=
\frac{\ell(\ell+1)}{N(N^2-1)(2\ell+1)},
\quad 
&\left(\begin{matrix}
\ell & \ell & 1 \\
m & 1-m & -1
\end{matrix}\right)^2
&=
\frac{(\ell-m+1)(\ell+m)}{2\ell(\ell+1)(2\ell+1)}, \\
\left(\begin{matrix}
\ell & \ell & 1 \\
m & -m & 0
\end{matrix}\right)^2
&=
\frac{m^2}{\ell(\ell+1)(2\ell+1)},
\quad
&\left(\begin{matrix}
\ell & \ell & 1 \\
m & -1-m & 1
\end{matrix}\right)^2 &= \frac{(\ell-m)(1+\ell+m)}{2\ell(\ell+1)(2\ell+1)}.
\end{align*}
Note that the $3j$ symbols on the right column above vanish when $m = -\ell$ (top) and $m = \ell$ (bottom), so that we can ignore those two exceptional cases. From this, we have
\begin{gather*}
\begin{split}
\Big(D_{\ell, m,\ell, 1-m}^{1,-1}\Big)^2
&= 
\frac{(\ell-m+1)(\ell+m)}{2N(N^2-1)(2\ell+1)^2},
\qquad \Big(D_{\ell, m,\ell, -m}^{1,0}\Big)^2
= 
\frac{m^2}{N(N^2-1)(2\ell+1)^2},
\\
\Big(D_{\ell, m,\ell,-1-m}^{1,1}\Big)^2
&= 
\frac{(\ell-m)(1+\ell+m)}{2N(N^2-1)(2\ell+1)^2}.
\end{split}
\end{gather*}
Therefore, the squares of the coefficients in \eqref{eq_A_formula_2} are
\begin{alignat*}{4}
\Big(C_{\ell,m,\ell,1-m}^{1,-1}\Big)^2 &= 12(2\ell+1)^2\frac{(\ell-m+1)(\ell+m)}{2N(N^2-1)(2\ell+1)^2} ~&=~
&\frac{6(\ell-m+1)(\ell+m)}{N(N^2-1)},&
\\ 
\Big(C_{\ell,m,\ell,-m}^{1,0}\Big)^2 &=
12(2\ell+1)^2\frac{m^2}{N(N^2-1)(2\ell+1)^2} ~&=~
&\frac{12m^2}{N(N^2-1)},&
\\
\Big(C_{\ell,m,\ell,-m-1}^{1,1}\Big)^2 &=
12(2\ell+1)^2\frac{(\ell-m)(1+\ell+m)}{2N(N^2-1)(2\ell+1)^2} ~&=~ &\frac{6(\ell-m)(1+\ell+m)}{N(N^2-1)}.&
\end{alignat*}
Adding these up concludes the proof.
\end{proof}
The next lemma is the main tool we need in order to compute the Ricci curvature of the quotient space $SU(N)/SO(3)$ using O'Neill's formula 
\eqref{eq_submersion_curvature}. 

\begin{lemma}\label{lemma_A_XY}
For any fixed $(\ell,m)$ with $2 \leq \ell \leq N-1$ and $-\ell \leq m \leq \ell$, we have
\begin{align*}
\sum\limits_{\ell' = 2}^{N-1}
~\sum\limits_{m' = -\ell'}^{\ell'} 3\frac{\|A_{T_{\ell m}}T_{\ell'm'}\|^2}{\|T_{\ell m} \wedge T_{\ell'm'}\|^2} = \frac{9}{2\ell(\ell+1)},
\end{align*}
where in the above summation, we skip the term $(\ell', m') = (\ell, m)$, for which the summand on the left is not well-defined.
\end{lemma}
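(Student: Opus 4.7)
The plan is to collapse the double sum using the selection rules for the structure constants and then apply Lemma~\ref{lemma_3_sum}. First I would observe that since $\mathfrak{h}=V_1$, the vertical projection $\mathcal{V}[T_{\ell m},T_{\ell' m'}]$ retains only the $\ell''=1$ component. By \eqref{eq_coefficients}, the coefficient $C_{\ell m\ell' m'}^{1\,m''}$ vanishes unless (i) the triangle inequality $|\ell-\ell'|\le 1$ holds and (ii) the parity factor $1-(-1)^{\ell+\ell'+1}$ is nonzero, i.e.\ $\ell+\ell'$ is even. Together these force $\ell'=\ell$, so $A_{T_{\ell m}}T_{\ell' m'}=0$ whenever $\ell'\neq\ell$, and the double sum collapses to a single sum over $m'$.

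For $\ell'=\ell$, the scaled geometric bracket of the Zeitlin model yields
\[
A_{T_{\ell m}}T_{\ell m'}=\pm\frac{1}{2\hbar_N}\,\mathcal{V}[T_{\ell m},T_{\ell m'}]=\pm\frac{1}{2\hbar_N}\sum_{m''=-1}^{1}C_{\ell m\,\ell m'}^{1\,m''}\,T_{1 m''}.
\]
Since the $T_{1m''}$ are mutually orthogonal with $\|T_{1m''}\|^2=\alpha_1=2/N$ in the Zeitlin metric, and the constraint $m+m'+m''=0$ pairs each admissible $m'\in\{-m-1,-m,-m+1\}\cap[-\ell,\ell]$ with a unique $m''$, summing gives
\[
\sum_{m'\neq m}\|A_{T_{\ell m}}T_{\ell m'}\|^2=\frac{\alpha_1}{4\hbar_N^2}\sum_{m''=-1}^{1}\bigl(C_{\ell m\,\ell,\,-m-m''}^{1\,m''}\bigr)^2=\frac{\alpha_1}{4\hbar_N^2}\cdot\frac{12\ell(\ell+1)}{N(N^2-1)}
\]
by Lemma \ref{lemma_3_sum}. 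A small check is that the excluded diagonal $m'=m$ intersects the admissible triplet only when $m=0$ and $m''=0$; but then the $3j$ symbol with top row $(\ell,\ell,1)$ and bottom row $(0,0,0)$ vanishes (since $2\ell+1$ is odd), so removing this term changes nothing.

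Finally, $T_{\ell m}$ and $T_{\ell m'}$ are orthogonal for $m\neq m'$, so $\|T_{\ell m}\wedge T_{\ell m'}\|^2=\alpha_\ell^2=\ell^2(\ell+1)^2/N^2$ is independent of $m'$ and factors out of the sum. Substituting $\alpha_1=2/N$ and $\hbar_N^2=4/(N^2-1)$ yields
\[
\sum_{m'\neq m}3\,\frac{\|A_{T_{\ell m}}T_{\ell m'}\|^2}{\|T_{\ell m}\wedge T_{\ell m'}\|^2}=\frac{3 N^2}{\ell^2(\ell+1)^2}\cdot\frac{3\ell(\ell+1)}{2 N^2}=\frac{9}{2\ell(\ell+1)},
\]
as desired. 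The principal difficulty is bookkeeping---propagating the Zeitlin weight $\alpha_\ell$, the scaling factor $1/\hbar_N$ built into the Lie bracket, and the triangle/parity selection rules---but once Lemma \ref{lemma_3_sum} absorbs the combinatorial content, the remaining arithmetic is routine.
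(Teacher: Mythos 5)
Your proof is correct and follows essentially the same route as the paper: project onto $V_1$, use the parity and triangle selection rules to force $\ell'=\ell$ with only three admissible $m'$, invoke Lemma~\ref{lemma_3_sum}, and factor out the constant wedge norm. Your explicit check that the excluded diagonal term $(\ell',m')=(\ell,m)$ only meets the admissible triplet at $m=0$, where the $3j$ symbol vanishes, is a small point the paper leaves implicit.
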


\begin{proof}
We expand the Lie bracket of $X = T_{\ell m}$ and $Y = T_{\ell'm'}$ according to  \eqref{eq_coefficients}, but since we are only interested in the vertical component, the condition $\ell'' = 1$ is imposed, which forces  $m''\in\{-1,0,1\}$, and we get 
\begin{gather}\label{eq_A_formula_1}
\begin{split}
A_{T_{\ell m}}T_{\ell' m'} &= \frac{1}{2}\mathcal{V} (\ad_{T_{\ell m}} T_{\ell'm'})  =-\frac{1}{2\hbar_N} 
[T_{\ell m}, T_{\ell' m'}] \\
&= -\frac{1}{2\hbar_N}\Big(C_{\ell m\ell' m'}^{1,-1}T_{1,-1} + C_{\ell m\ell' m'}^{1,0}T_{1,0} + C_{\ell m\ell' m'}^{1,1}T_{1,1}\Big).
\end{split}
\end{gather}
Selection rules for $3j$ symbols imply a dependence of $\ell', m'$ on $\ell, m$, so the only nonzero values are:
\begin{gather}\label{eq_A_formula_2}
\begin{split}
A_{T_{\ell m}}T_{\ell, 1-m}
&=
-\tfrac{1}{2\hbar_N}C_{\ell,m,\ell,1-m}^{1,-1}T_{1,-1} \qquad \text{for $m \neq -\ell$}, \\
A_{T_{\ell m}}T_{\ell, -m} 
&=
-\tfrac{1}{2\hbar_N}C_{\ell,m,\ell,-m}^{1,0}T_{1,0}, \\
A_{T_{\ell m}}T_{\ell, -1-m}
&=
-\tfrac{1}{2\hbar_N}C_{\ell,m,\ell,-m-1}^{1,1}T_{1,1} \qquad \text{for $m \neq \ell$}.
\end{split}
\end{gather}
Recall that the $\{ T_{\ell m} \}$ basis is orthonormal in the bi-invariant metric, but only orthogonal in the Zeitlin metric \eqref{eq_zeitlin_metric}, so we have \begin{equation*}
\| T_{\ell m} \wedge T_{\ell' m'} \|^2 = \|T_{\ell m}\|^2\|T_{\ell'm'}\|^2 = \frac{\ell(\ell+1)\ell'(\ell'+1)}{N^2}, \qquad \text{for $(\ell, m) \neq (\ell',m')$}.
\end{equation*}
In particular, $\|T_{1m}\|^2 = 2/N$ for $|m| \leq 1$. Thus, summing over all $(\ell', m') \neq (\ell, m)$ and using \eqref{eq_A_formula_2},
\begin{gather*}
\begin{split}
&\sum\limits_{\ell' = 2}^{N-1}
\sum\limits_{m' = -\ell'}^{\ell'} 3\frac{\|A_{T_{\ell m}}T_{\ell'm'}\|^2}{\|T_{\ell m} \wedge T_{\ell'm'}\|^2}
=
\frac{3N^2}{\ell^2(\ell+1)^2} \sum\limits_{\ell' = 2}^{N-1}
\sum\limits_{m' = -\ell'}^{\ell'} \|A_{T_{\ell m}}T_{\ell'm'}\|^2 \\
&\qquad\qquad=
\frac{3N^2}{4\ell^2(\ell+1)^2\hbar_N^2}\left(
\Big\|C_{\ell,m,\ell,1-m}^{1, -1}T_{1, -1}\Big\|^2 +
\Big\|C_{\ell,m,\ell,1-m}^{1, 0}T_{1, 0}\Big\|^2 +
\Big\|C_{\ell,m,\ell,1-m}^{1, 1}T_{1, 1}\Big\|^2\right) \\
&\qquad\qquad= 
\frac{3N}{2\ell^2(\ell+1)^2\hbar_N^2}\left(
\left(C_{\ell,m,\ell,1-m}^{1, -1}\right)^2 +
\left(C_{\ell,m,\ell,1-m}^{1, 0}\right)^2 +
\left(C_{\ell,m,\ell,1-m}^{1, 1}\right)^2\right) \\
&\qquad\qquad= \frac{3N}{2\ell^2(\ell+1)^2}\frac{N^2-1}{4}\frac{12\ell(\ell+1)}{N(N^2-1)} \\
&\qquad\qquad= \frac{9}{2\ell(\ell+1)},
\end{split}
\end{gather*}
by Lemma \ref{lemma_3_sum}.
\end{proof}
The last ingredient we need is the following sum of sectional curvatures of $SU(N)$.
\begin{lemma}\label{lemma_kg}
For any $2 \leq \ell \leq N-1$ and $-\ell \leq m \leq \ell$, we have
\begin{equation}
K_G(T_{\ell m},T_{10}) + K_G(T_{\ell m}, T_{1, -1}) + K_G(T_{\ell m}, T_{1, 1}) = \frac{3}{2\ell(\ell+1)}.
\end{equation}
\end{lemma}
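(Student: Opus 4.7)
The plan is to apply Arnold's sectional curvature formula \eqref{arnoldsectional} with $u=T_{\ell m}$ and $v=T_{1q}$ for each $q\in\{-1,0,1\}$, exploiting the fact that $V_1$ is the rotation subalgebra and that $\ad_w$ preserves every $V_\ell$ for $w\in V_1$. This preservation property, together with the fact that each $V_\ell$ is an eigenspace of $\Delta_N$, forces a massive simplification of the $\adstar$ terms.

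First I would use Lemma \ref{covderivprop} to derive compact formulas for $\adstar$ whenever one argument lies in $V_1$. Since $v\in V_1$ satisfies $\Delta_N v=-\lambda_1 v$ with $\lambda_1=2$, and $\ad_u v$ lies in $V_\ell$ (because $\ad_u v=-\ad_v u$ and $\ad_v$ preserves $V_\ell$), the expression $\adstar_u v=-\Delta_N^{-1}\ad_u\Delta_N v$ collapses to
\[
\adstar_u v=-\frac{\lambda_1}{\lambda_\ell}\,\ad_u v,\qquad \adstar_v u=\ad_u v,
\]
for $u\in V_\ell$, $v\in V_1$. Moreover $\adstar_u u=0$ by the same eigenspace argument, and $\adstar_v v=0$ because $[v,v]=0$, so the term $\langle\adstar_u u,\adstar_v v\rangle$ in Arnold's formula vanishes; the inner product $\langle T_{\ell m},T_{1q}\rangle$ vanishes as well since the two basis vectors lie in distinct $V_\ell$-summands.

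Substituting into \eqref{arnoldsectional}, with $a:=\lambda_1/\lambda_\ell$ the numerator reduces to $[\tfrac14(2-a)^2-(1-a)]\,\|\ad_u v\|^2=(a^2/4)\,\|\ad_u v\|^2=\|\ad_u v\|^2/\lambda_\ell^2$, so
\[
K_G(T_{\ell m},T_{1q})=\frac{\|\ad_{T_{\ell m}}T_{1q}\|^2}{\lambda_\ell^2\,\|T_{\ell m}\|^2\,\|T_{1q}\|^2}=\frac{N^2\,\|\ad_{T_{\ell m}}T_{1q}\|^2}{2\lambda_\ell^3},
\]
using $\|T_{\ell m}\|^2=\lambda_\ell/N$ and $\|T_{1q}\|^2=2/N$. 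It remains to sum $\|\ad_{T_{\ell m}}T_{1q}\|^2$ over $q\in\{-1,0,1\}$. The bracket $[T_{\ell m},T_{1q}]$ stays in $V_\ell$ and the selection rule $m+q+m''=0$ forces a single target, so $[T_{\ell m},T_{1q}]=C^{\ell,-m-q}_{\ell m\,1\,q}\,T_{\ell,-m-q}$. A column-permutation symmetry of the Wigner $3j$ and $6j$ symbols in \eqref{eq_coefficients} shows $(C^{\ell,-m-q}_{\ell m\,1\,q})^2=(C^{1,q}_{\ell m,\ell,-m-q})^2$, so Lemma \ref{lemma_3_sum} applies and gives the sum $12\ell(\ell+1)/(N(N^2-1))$. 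Combining with $\|T_{\ell,-m-q}\|^2=\lambda_\ell/N$ and $\hbar_N^{-2}=(N^2-1)/4$ yields $\sum_q\|\ad_{T_{\ell m}}T_{1q}\|^2=3\lambda_\ell^2/N^2$, and substituting into the curvature expression produces exactly $3/(2\lambda_\ell)=3/(2\ell(\ell+1))$.

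The main obstacle will be the careful bookkeeping of signs and of the scaling factor $\hbar_N$ relating $\ad$ to the raw bracket, together with verifying the permutation symmetry that identifies the two forms of the relevant structure constant. Once those technicalities are in hand, the conceptual heart of the proof -- the collapse of Arnold's formula to a clean scalar multiple of $\|\ad_{T_{\ell m}}T_{1q}\|^2$, enabled by the commutation of $\ad_{T_{1q}}$ with $\Delta_N$ on each $V_\ell$ -- reduces the computation to a direct application of Lemma \ref{lemma_3_sum}.
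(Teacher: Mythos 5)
Your proposal is correct and follows essentially the same route as the paper: apply Arnold's formula \eqref{arnoldsectional} with one argument in $V_1$, use the eigenspace structure of $\Delta_N$ to collapse the $\adstar$ terms so the numerator becomes a clean multiple of $\|\ad_{T_{\ell m}}T_{1q}\|^2$, and then sum the squared structure constants via Lemma \ref{lemma_3_sum}. The only differences are cosmetic (you swap the roles of $u$ and $v$, which is immaterial by symmetry of $K$), and you are in fact slightly more careful than the paper in spelling out the column-permutation symmetry that matches the structure constants $C^{\ell,-m-q}_{\ell m\,1\,q}$ to those appearing in Lemma \ref{lemma_3_sum}.
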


\begin{proof}
Recall Arnold's formula for sectional curvature of a Lie group in the form
\eqref{arnoldsectional}. 
We have $\mathrm{ad}_uv = -\tfrac{1}{\hbar_N}[u, v]$ and $\adstar_uv = \tfrac{1}{\hbar_N}\Delta^{-1}\big[u,\,\Delta v\big]$, where $\Delta$ is the quantized Laplacian. For $u = T_{1m'}$, $-1 \leq m' \leq 1$, and $v = T_{\ell m}$, then in the Zeitlin metric we have:
\begin{gather}\label{eq_ad_adstar}
\begin{split}
 \langle u, v \rangle &= 0; \quad \|u\|^2 = \alpha_1 = 2/N; \quad \|v\|^2 = \alpha_{\ell} = \ell(\ell+1)/N, \\
\adstar_uu &= \adstar_vv = 0, \\ 
\mathrm{ad}_uv &= \mathrm{ad}_{T_{1m'}}T_{\ell m} = \tfrac{1}{\hbar_N}\big[T_{\ell m}, \, T_{1m'}\big] = \tfrac{1}{\hbar_N} C_{\ell m 1 m'}^{\ell, -m}\, T_{\ell, -m}, \\
\adstar_uv &= \adstar_{T_{1m'}}T_{\ell m} = \tfrac{1}{\hbar_N}\Delta^{-1}[T_{1m'},\, \Delta T_{\ell m}] = \tfrac{1}{\hbar_N}C_{1 m' \ell m}^{\ell, -m} \, T_{\ell, -m} = -\tfrac{1}{\hbar_N}C_{\ell m 1 m'}^{\ell, -m}\, T_{\ell, -m}, \\
\adstar_vu &= \adstar_{T_{\ell m}}T_{1m'} = \tfrac{1}{\hbar_N}\Delta^{-1}[T_{\ell m},\, \Delta T_{1m'}] = \frac{2}{\ell(\ell+1)\hbar_N} C_{\ell m 1 m'}^{\ell, -m} \, T_{\ell, -m}.
\end{split}
\end{gather}
It follows from the third and fourth lines in \eqref{eq_ad_adstar} that
\begin{equation*}
\adstar_uv + \mathrm{ad}_uv = \adstar_{T_{1m'}}T_{\ell m} + \mathrm{ad}_{T_{1m'}}T_{\ell m} = 0.
\end{equation*}
Thus, Arnold's formula reduces to
\begin{gather}
\begin{split}
K_G(T_{1m'},T_{\ell m}) &= \frac{1}{4} \frac{\| \adstar_{T_{\ell m}}T_{1m'} \|^2}{\| T_{1m'}\|^2 \| {T_{\ell m}}\|^2-\langle T_{1m'},{T_{\ell m}}\rangle^2} \\ 
&= \frac{N}{4}\frac{\| \adstar_{T_{\ell m}}T_{1m'} \|^2}{2\ell(\ell+1)} \\ 
&= \frac{N}{8\ell(\ell+1)}\left(
\frac{2}{\ell(\ell+1)\hbar_N} C_{\ell m 1 m'}^{\ell, -m}\right)^2 \ell(\ell+1) \\
&= \frac{N}{2\ell^2(\ell+1)^2\hbar_N^2} \left(C_{\ell m 1 m'}^{\ell, -m}\right)^2.
\end{split}
\end{gather}
Adding these up and using Lemma \ref{lemma_3_sum} gives
\begin{gather*}
\begin{split}
\sum\limits_{m' = -1}^1 K_G(T_{1m'}, T_{\ell m})
&= \frac{N}{2\ell^2(\ell+1)^2\hbar_N^2}
\sum\limits_{m' = -1}^1
\left(C_{\ell m 1 m'}^{\ell, -m}\right)^2 \\
&=
\frac{N}{2\ell^2(\ell+1)^2\hbar_N^2}\frac{12\ell(\ell+1)}{N(N^2-1)} \\
&= \frac{3}{2\ell(\ell+1)}.
\end{split}
\end{gather*}
\end{proof}

\begin{proof}[Proof of Theorem \ref{homogeneous_space_thm}] 
Let $\Ric_B$ denote the Ricci curvature of the quotient space $SU(N)/SO(3)$. Summing over $(\ell', m') \neq (\ell, m)$ below, we get 
\begin{gather*}
\begin{split}
\Ric_B(T_{\ell m}, T_{\ell m}) &= \|T_{\ell m}\|^2\sum\limits_{\ell' = 2}^n~\sum\limits_{m' = -\ell'}^{\ell'} K_B(T_{\ell m}, T_{\ell' m'}) \\
&= \frac{\lambda_{\ell}}{N}\sum\limits_{\ell' = 2}^n~\sum\limits_{m' = -\ell'}^{\ell'} \Big(K_G(T_{\ell m}, T_{\ell' m'}) + 3\frac{\|A_{T_{\ell m}}T_{\ell' m'}\|^2}{\|T_{\ell m} \wedge T_{\ell'm'}\|^2}\Big) \\
&= \frac{\lambda_{\ell}}{N}\left( \sum\limits_{\ell' = 2}^n~\sum\limits_{m' = -\ell'}^{\ell'} K_G(T_{\ell m}, T_{\ell' m'})\right) + \frac{9\lambda_{\ell}}{2N\ell(\ell+1)} \qquad\qquad (\text{Lemma \ref{lemma_A_XY}}) \\
&= \Ric(T_{\ell m}, T_{\ell m}) -\!\frac{\lambda_{\ell}}{N}\sum_{m' = -1}^1 K_G(T_{\ell m}, T_{1, m'}) + \frac{9}{2N} \\
&= \Ric(T_{\ell m}, T_{\ell m}) -\frac{3}{2N} + \frac{9}{2N} \qquad\qquad\quad (\text{Lemma \ref{lemma_kg}}) \\
&= \Ric(T_{\ell m}, T_{\ell m}) + \frac{3}{N}.
\end{split}
\end{gather*}
Now, since $\Ric(T_{\ell m}, T_{\ell m}) = r_{\ell}(N)\langle T_{\ell m}, T_{\ell m}\rangle$ by Theorem \ref{main_thm}, we get
\begin{gather*}
\begin{split}
\Ric_B(T_{\ell m}, T_{\ell m}) = r_{\ell}(N)\langle T_{\ell m}, T_{\ell m} \rangle + \frac{3}{N} = \bigg(r_{\ell}(N) + \frac{3}{\lambda_{\ell}}\bigg)\langle T_{\ell m}, T_{\ell m} \rangle.
\end{split}
\end{gather*}

\end{proof}

\bibliographystyle{amsplain}
\bibliography{bibliography}

\end{document}